\numberwithin{equation}{section}
\newtheorem{theorem}{Theorem}
\newtheorem{lemma}[theorem]{Lemma}
\newtheorem{corollary}[theorem]{Corollary}
\theoremstyle{remark}
\newtheorem*{remarks}{Remarks}
\numberwithin{theorem}{section} \numberwithin{equation}{section}
\numberwithin{figure}{section}
\newcommand{\R}{\mathbb{R}}
\newcommand{\Q}{\mathbb{Q}}
\newcommand{\Z}{\mathbb{Z}}
\newcommand{\N}{\mathbb{N}}
\renewcommand{\(}{\left(}
\renewcommand{\)}{\right)}
\begin{document}
\title{On the asymptotic behavior of unimodal rank generating functions}

\author[K. Bringmann]{Kathrin Bringmann} 
\address{Mathematical Institute\\University of
Cologne\\ Weyertal 86-90 \\ 50931 Cologne \\Germany}
\email{kbringma@math.uni-koeln.de}

\author[B. Kim]{Byungchan Kim}
\address{School of Liberal Arts \\ Seoul National University of Science and Technology \\ 232 Gongneung-ro, Nowongu, Seoul,139-743, Republic of  Korea}
\email{bkim4@seoultech.ac.kr}

\date{\today}
\thanks{The research of the first author was supported by the Alfried Krupp Prize for Young University Teachers of the Krupp foundation and the research leading to these results has received funding from the European Research Council under the European Union's Seventh Framework Programme (FP/2007-2013) / ERC Grant agreement n. 335220 - AQSER. The research of the second author was supported by the Basic Science Research Program through the National Research Foundation of Korea (NRF) funded by the Ministry of Education (NRF- 2013R1A1A2061326).} 
\subjclass[2000] {}
\keywords{}

\maketitle

\begin{abstract}
 In a recent paper, J. Lovejoy and the second author conjectured that ranks for four types of unimodal like sequences satisfy certain inequalities. In this paper, we prove these conjectures asymptotically. For this, we extend Wright's Circle Method and analyze the asymptotic behavior of certain general partial theta functions.   
\end{abstract}

\section{Introduction and statement of results}

An integer sequence is unimodal if there is a peak in the sequence. Let $u(n)$ denote the number of unimodal sequences of the form
\begin{equation} \label{useqdef}
a_1 \leq a_2 \leq \cdots \leq a_r \leq \overline{c} \geq b_1 \geq b_2 \geq \cdots \geq b_s
\end{equation} 
with weight $n = c + \sum_{j=1}^r a_j + \sum_{j=1}^s b_j$. In Ramanujan's lost notebook \cite[Entry 6.3.2]{An-Be1}, we find that  
\begin{equation} \label{entry632}
\sum_{n \geq 0} \frac{q^{n}}{( \zeta q)_n( \zeta^{-1} q)_n} = \frac{\sum_{n \geq 0} (-1)^n\zeta^{2n+1}q^{\frac{n(n+1)}{2}}}{( \zeta q)_{\infty}(\zeta^{-1} q)_{\infty}}  + (1-\zeta)\sum_{n \geq 0} (-1)^n \zeta^{3n}q^{\frac{n(3n+1)}{2}}\left(1-\zeta^2q^{2n+1}\right),
\end{equation}
where  $(a)_n=(a;q)_{n} := \prod_{k=1}^{n} (1-a q^k)$ for $n \in \mathbb{N}_{0} \cup \{\infty\}$. Since for $\zeta=1$ the left side of \eqref{entry632} becomes a generating function for $u(n)$, we can think that the coefficient of $\zeta^m q^n$, after expanding the left side, as a refinement of the number of unimodal sequences of weight $n$. This is a motivation for the definition of the unimodal rank, which is $s-r$. If we define $u(m,n)$ as the number of unimodal sequence with rank $m$, then we can see that left-hand side of \eqref{entry632} is the generating function for $u(m,n)$.  Due to the equality, though it is not clear at all combinatorially, the right-hand side of \eqref{entry632} is also the generating function for $u(m,n)$. Actually, the partial theta function on the right-hand side has played an important role in studying arithmetic property of $u(n)$ and $u(m,n)$ \cite{KL1, Wr1, Wr2}. When $\zeta=1$ the right side of \eqref{entry632} is a product of an infinite modular product and a partial theta function. Using this expression, Wright obtained asymptotics for $u(n)$ \cite{Wr1, Wr2}. Since the generating function in \eqref{entry632} is not modular, the classical Circle Method introduced by Hardy and Ramanujan does not work in this case. Wright carefully examined the asymptotic behavior of partial theta functions to employ the Circle Method.  On the other hand, Lovejoy and the second author \cite{KL1} studied the rank differences for $u(m,n)$ and congruences for certain arithmetic function involving $u(m,n)$ by analyzing the partial theta function that appeared in the generating function. In a sequel paper \cite{KL2}, Lovejoy and the second author studied the rank differences for three further different types of unimodal sequences. As the rank for $u(n)$ stems from a two variable partial theta function identity, all of these ranks are motivated from two variable partial theta function identities. These three types of unimodal ranks are denoted by $w(m,n)$, $v(m,n)$, and $\nu(m,n)$, respectively. (See Section 2 for the combinatorial definitions.)  While studying the rank differences for these unimodal ranks, Lovejoy and the second author \cite{KL2} conjectured that these ranks are weakly decreasing, i.e., for non-negative integers $m$ and $j$ with $m>j$,
\[
u(m,n) > u(j,n)
\]
holds for large enough integers $n$, and the same phenomenon occurs for the other three unimodal ranks. The main goal of this paper is confirming these conjectures asymptotically. Namely, we prove that
\begin{theorem} \label{mainthm}
For non-negative integers $m$ and $j$ with $m>j$, the inequalities
\begin{align}
u(j,n) &> u(m,n), \label{1.3}\\
w(j,n) &> w(m,n), \label{1.4}\\
v(j,n) &> v(m,n), \label{1.5}\\
\nu(j,n) &> \nu (m,n) \notag 
\end{align}
hold for all sufficiently large integers $n$. 
\end{theorem}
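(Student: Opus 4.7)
The strategy is to obtain precise asymptotics for each of $u(m,n)$, $w(m,n)$, $v(m,n)$, $\nu(m,n)$ (with $m$ fixed and $n\to\infty$) and to verify that the leading constant is a strictly decreasing function of $m\ge 0$. I describe the plan for $u(m,n)$; the remaining three cases are analogous, using the partial theta identities of \cite{KL2} in place of \eqref{entry632}.

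First, for fixed $m\ge 0$, I would extract
\[
G_m(q):=\sum_{n\ge 0}u(m,n)\,q^n=\frac{1}{2\pi i}\oint U(\zeta;q)\,\zeta^{-m-1}\,d\zeta,
\]
where $U(\zeta;q)$ is the two-variable generating function on either side of \eqref{entry632}. Expanding the right-hand side of \eqref{entry632} in $\zeta$ presents $G_m(q)$ as the sum of a modular piece built from $1/(q)_\infty^{2}$ multiplied by an $m$-dependent partial theta coefficient, plus a purely partial theta contribution coming from the second summand on the right of \eqref{entry632}, whose $\zeta$-expansion was studied in \cite{KL1}.

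Next, I would apply Wright's Circle Method, as extended in the body of this paper to handle general partial theta functions. Near the dominant singularity $q\to 1$, the modular product has well-known behavior via modular transformation, while the partial theta piece is controlled by the general asymptotic expansion of partial theta functions developed here. Combining these yields an expansion of the shape
\[
u(m,n)=A(m)\,\frac{e^{c\sqrt{n}}}{n^{\alpha}}\bigl(1+O(n^{-1/2})\bigr),
\]
where $c>0$ and $\alpha$ are independent of $m$, and the leading constant $A(m)$ is an explicit positive function of $m$ obtained from the partial theta Fourier coefficient evaluated at the saddle.

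The final step is to verify that $A(m)$ is strictly decreasing in $m\ge 0$. This reduces to a single explicit identity (or a bounded finite check) for the leading Fourier coefficient of a fixed theta-type test function at the saddle. Strict monotonicity then yields
\[
u(j,n)-u(m,n)\sim\bigl(A(j)-A(m)\bigr)\frac{e^{c\sqrt{n}}}{n^{\alpha}}>0
\]
for $j<m$ and all sufficiently large $n$, proving \eqref{1.3}. Running the same argument with the partial theta identities for $w$, $v$, $\nu$ produces the remaining three inequalities. The main obstacle is the asymptotic analysis of the partial theta contributions: they are not modular, so the classical Hardy--Ramanujan circle method fails, and the extension of Wright's method must be carried out with enough precision to isolate the $m$-dependence of $A(m)$ and verify its strict monotonicity in each of the four cases. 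That extension is the technical heart of the paper; once it is in place, the four inequalities follow by the same three-step template.
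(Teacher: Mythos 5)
Your overall template---Cauchy's formula in $\zeta$, a decomposition into (infinite product)$\times$(partial theta) plus a pure partial theta piece, an extension of Wright's Circle Method driven by an asymptotic expansion of general partial theta functions---is exactly the route the paper takes. But there is a genuine gap in the final step: your assertion that the asymptotic has the shape $u(m,n)=A(m)\,e^{c\sqrt n}n^{-\alpha}\bigl(1+O(n^{-1/2})\bigr)$ with the $m$-dependence sitting in the \emph{leading} constant $A(m)$ is false, and the monotonicity check you propose would come back empty. In fact the leading constant is independent of $m$: by Corollary \ref{umnasym}, $u(m,n)=\frac{\pi^2}{2}X_3(n)+\frac{\pi^3}{3}X_4(n)+\frac{\pi^4}{72}(59-36m^2)X_5(n)+O\bigl(n^{-3}e^{2\pi\sqrt{n/3}}\bigr)$, so the first two terms of the expansion carry no $m$ at all, and the $m$-dependence first appears in the third term $X_5(n)$, which is smaller than the main term by a full factor of $n$. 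With your setup you would find $A(j)-A(m)=0$ and could only conclude $u(j,n)-u(m,n)=o\bigl(u(m,n)\bigr)$, with no sign information. The same phenomenon occurs for $w$, $v$, and $\nu$ (the $m$-dependence enters at $X_6$, $X_5$, and $Y_4$ respectively).

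The fix is precisely what forces most of the technical work in the paper: one must produce a genuine multi-term asymptotic expansion, with the error pushed \emph{below} the order at which $m$ first enters, and one must control the $z$-integrals (via Euler polynomial evaluations) and the convergence of the resulting double/quintuple sums (via Lehmer's bound on Bernoulli polynomials and the derivative bounds of Lemma \ref{lemma2}) uniformly enough to extract that third coefficient exactly. This is why Theorem \ref{genasym} is stated with $N$ Bessel-function terms and error $O\bigl(n^{-(N+2)/2}e^{2\pi\sqrt{2n/L}}\bigr)$, and why Theorem \ref{FasymThm} must give a convergent expansion in both $z$ and $\tau$ rather than just a leading-order estimate. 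Once the coefficient of the first $m$-dependent term is computed (e.g.\ $59-36m^2$ for $u$), its strict monotonicity in $m^2$ gives $u(j,n)-u(m,n)\sim\frac{\pi^4}{2}\,(m^2-j^2)\,(2\sqrt{3n})^{-5}I_{-5}\bigl(2\pi\sqrt{n/3}\bigr)>0$, which is the actual conclusion of the paper. So your plan needs to be amended from ``verify the leading constant is decreasing in $m$'' to ``expand far enough to isolate the first $m$-dependent coefficient, with an error term of strictly smaller order, and verify that coefficient is decreasing in $m$.''
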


\begin{remarks}
\begin{enumerate}
\item[(i)] Due to the symmetry $u(m,n) = u(-m,n)$ (which also holds for the other unimodal ranks), we see that asymptotically unimodal ranks of weight $n$ are unimodal sequence with peak $u(0,n)$.
\item[(ii)] For the ranks and cranks for the ordinary partition function, the inequalities of the same type have been established by various methods \cite{BD, DM, KKS, PR}. In these cases, the generating functions are simpler, as they are (mock) modular.
\end{enumerate}
\end{remarks}

 As Wright used the asymptotic behavior of a partial theta function to obtain an asymptotic formula for $u(n)$, the asymptotic behavior of a partial theta functions also plays a crucial role in obtaining an asymptotic formula for unimodal ranks. However, as our partial theta functions are two variable functions, analyzing their asymptotic behavior is more involved. In particular, one has to show that the resulting asymptotic expansions converges. 
 
 The rest of paper is organized as follows. In Section 2, we explain what each arithmetic function $u(m,n)$, $w(m,n)$, $v(m,n)$, and $\nu(m,n)$ counts and  give their generating functions. In Section 3, we recall basic properties of certain modular forms and evaluates special kinds of integrals. In Section 4, we obtain the asymptotic behavior of a general partial theta function which is an essential part of the proof. In Section 5, by adopting Wright's Circle Method, we prove an asymptotic formula for a quite general generating function.  In Sections 6--9, we obtain asymptotic formulas for these unimodal rank functions by applying the results from Section 4. From these asymptotic formulas, Theorem \ref{mainthm} follows immediately.

\section{Unimodal generating functions}
 In this section, we introduce four types of unimodal sequences and their ranks. For the proofs, we refer the reader to \cite{KL1,KL2}.

\subsection{Unimodal sequences} \label{umnsection}

Recall that $u\left(n\right)$ denotes the number of unimodal sequences of the form \eqref{useqdef} with weight $n = c + \sum_{j=1}^r a_j + \sum_{j=1}^s b_j$. For example, $u\left(4\right)=12$, the relevant sequences being 
\begin{equation*} \label{ex1}
\begin{gathered}
\left(\overline{4}\right), \left(1,\overline{3}\right), \left(\overline{3},1\right), \left(1,\overline{2},1\right), \left(\overline{2},2\right), \left(2,\overline{2}\right), \\ \left(1,1,\overline{2}\right), \left(\overline{2},1,1\right), \left(\overline{1},1,1,1\right), \left(1,\overline{1},1,1\right), \left(1,1,\overline{1},1\right), \left(1,1,1,\overline{1}\right).
\end{gathered}
\end{equation*}
Define the rank of a unimodal sequence to be $s-r$, and assume that the empty sequence has rank $0$.  Let $u\left(m,n\right)$ be the number of unimodal sequences of weight $n$ with rank $m$.  Then the generating function for $u\left(m,n\right)$ is given by \eqref{entry632}. Note the symmetries $u\left(m,n\right) = u\left(-m,n\right)$, which follows upon exchanging the partitions $\sum_{j=1}^r a_j$ and  $\sum_{j=1}^s b_j$ in \eqref{useqdef}.

\subsection{Unimodal sequences with double peak}
 Let $w\left(n\right)$ be the number of unimodal sequences with a double peak, i.e., sequences of the form 
\begin{equation} \label{seqdefbis}
a_1 \leq a_2 \leq \cdots \leq a_r \leq \overline{c} \ \overline{c} \geq b_1 \geq b_2 \geq \cdots \geq b_s,
\end{equation} 
with weight $n = 2c + \sum_{j=1}^r a_j + \sum_{j=1}^s b_j$.  For example, $w\left(6\right)=11$, the relevant sequences being
\begin{equation*}
\begin{gathered}
\left(\overline{3},\overline{3}\right), \left(\overline{2},\overline{2},2\right), \left(2,\overline{2},\overline{2}\right), \left(\overline{2},\overline{2},1,1\right),\left(1,\overline{2},\overline{2},1\right),  \left(1,1,\overline{2},\overline{2}\right), \\
\left(\overline{1},\overline{1},1,1,1,1\right),\left(1,\overline{1},\overline{1},1,1,1\right)\left(1,1,\overline{1},\overline{1},1,1\right),\left(1,1,1,\overline{1},\overline{1},1\right),\left(1,1,1,1,\overline{1},\overline{1}\right).
\end{gathered}
\end{equation*}
Define the rank of such a unimodal sequence to be $s-r$, and assume that the empty sequence has rank $0$.  Let $w\left(m,n\right)$ denote the number of sequences counted by $w\left(n\right)$ with rank $m$. Then the generating function for $w\left(m,n\right)$ is given by (see \cite[Proposition 2.1]{KL2})
\begin{align*}
W\left(\zeta; q\right) &:=\sum_{n\geq 0} \sum_{m\in\Z} w\left(m, n\right)\zeta^m q^n 
=\sum_{n \geq 0} \frac{q^{2n}}{(\zeta q)_{n} (\zeta^{-1} q)_{n}} \\
&=\frac{\zeta^2+\left(1+\zeta^2\right)\sum_{ n\geq 1}\left(-1\right)^n \zeta^{2n} q^{\frac{n\left(n+1\right)}{2}}}{\left(\zeta q \)_{\infty} \( {\zeta^{-1} q}\right)_\infty}\\
&\quad +1  - \zeta^2+\left(1+ \zeta^2\right)\left(1-\zeta\right)\sum_{n\geq 1} \left(-1\right)^n \zeta^{3n-2} q^{\frac{n\left(3n-1\right)}{2}}\left(1+ \zeta q^n\right).
\end{align*}
Note the symmetries $w\left(m,n\right) = w\left(-m,n\right)$, which follows upon exchanging the partitions $\sum_{j=1}^r a_j$ and  $\sum_{j=1}^s b_j$ in \eqref{seqdefbis}.     
\subsection{Durfee unimodal sequences} Let $v\left(n\right)$ denote the number of unimodal sequences of the form \eqref{useqdef}, where $\sum_j {b_j}$ is a partition into parts at most $c-k$ and $k$ is the size of the Durfee square of the partition $\sum_j {a_j}$.    For example, $v\left(4\right)=10$, the relevant sequences being 
\begin{equation*} \label{ex2}
\left(\overline{4}\right), \left(1,\overline{3}\right), \left(\overline{3},1\right), \left(1,\overline{2},1\right), \left(\overline{2},2\right), \left(2,\overline{2}\right), \left(1,1,\overline{2}\right), \left(\overline{2},1,1\right), \\ \left(\overline{1},1,1,1\right),  \left(1,1,1,\overline{1}\right).
\end{equation*}
Define the rank of a sequence counted by $v\left(n\right)$ to be $s-r$, and assume that the empty sequence has rank $0$.  Let $v\left(m,n\right)$ denote the number of sequences counted by $v\left(n\right)$ with rank $m$. Then the generation function is given by (see \cite[Proposition 3.1]{KL2})
\begin{align*}
V\left(\zeta;q\right)&:=\sum_{n\geq 0}\sum_{m\in\Z} v\left(m, n\right)\zeta^m q^n =\sum_{n \geq 0} \frac{(q^{n+1})_n q^n}{(\zeta q)_n (\zeta^{-1} q)_n} \\
&=\frac{\zeta}{\left(\zeta q\)_{\infty} \(\zeta^{-1} q\right)_\infty }\sum_{n \ge 0} \zeta^{3n} q^{3n^2+2n}\left(1-\zeta q^{2n+1}\right) +\left(1-\zeta\right)\sum_{n \ge 0} \zeta^n q^{n^2+n}.
\end{align*}
Although they are not obvious from the definition, the symmetries $v\left(m,n\right) =v\left(-m,n\right)$ follow from the generating function.  

\subsection{Odd-Even unimodal sequences} Let $\nu\left(n\right)$ denote the number of unimodal sequences of the form \eqref{useqdef} where $c$ has to be odd, $\sum_j {a_j}$ is a partition without repeated even parts, and $\sum_j {b_j}$ is an overpartition into odd parts whose largest part is not $\overline{c}$.  (Recall that an overpartition is a partition in which the first occurrence of a part may be overlined.)  For example, $\nu\left(5\right)=12$, the relevant sequences being 
\begin{equation*} \label{ex3}
\begin{gathered}
\left(\overline{5}\right), \left(1,\overline{3},1\right), \left(1,1,\overline{3}\right), \left(\overline{3},1,1\right), \left(\overline{3},\overline{1},1\right), \left(1,\overline{3},\overline{1}\right), \left(2,\overline{3}\right), \\ \left(1,1,1,1,\overline{1}\right), \left(1,1,1,\overline{1},1\right), \left(1,1,\overline{1},1,1\right), \left(1,\overline{1},1,1,1\right),  \left(\overline{1},1,1,1,1\right).
\end{gathered}
\end{equation*}
Define the rank of a sequence counted by $\nu\left(n\right)$ to be the number of odd non-overlined parts in $\sum_j b_j$ minus the number of odd parts in $\sum_j a_j$, and assume that the empty sequence has rank $0$.  Let $\nu\left(m,n\right)$ denote the number of sequences counted by $\nu\left(n\right)$ with rank $m$. Then the generating function is given by (see  \cite[Proposition 4.1]{KL2})
\begin{align*}
\mathcal{V}(\zeta;q) &:=\sum_{n\geq 0} \sum_{m\in\Z} \nu\left(m, n\right)\zeta^m q^n =\sum_{n \geq 0} \frac{(-q)_{2n}q^{2n+1}}{(\zeta q ;q^2)_{n+1} (\zeta^{-1} q ;q^2)_{n+1}} \\
&=\frac{\zeta\left(-q\right)_\infty}{\left(1+\zeta\right)\left(\zeta q;q^2 \)_{\infty} \( {\zeta^{-1} q}; q^2\right)_\infty} \sum_{n\geq 0} \left(-1\right)^n \zeta^n q^{\frac{n\left(n+1\right)}{2}}-\frac{\zeta}{1+\zeta} \sum_{n \geq 0} \left(-1\right)^n \zeta^n q^{n^2+n}.
\end{align*}
Note the symmetries $\nu\left(m,n\right) = \nu\left(-m,n\right)$, which follow from exchanging the odd parts of $\sum_j a_j$ with the odd non-overlined parts of $\sum_j b_j$.

\section{Preliminaries and some integral approximations}
In this section we recall some special modular forms and their behavior under modular inversion and give some (asymptotic) integral evaluation that are required for our proofs.

\subsection{Special modular forms and Jacobi forms}
Define the usual Dedekind $\eta$ function ($q:= e^{2 \pi i \tau}$  throughout)
\[
\eta(\tau):=q^{\frac1{24}}\prod_{n\geq 1}\left(1-q^n\right)
\]
and Jacobi's theta function ($\zeta:=e^{2\pi iz}$ throughout) 
\[
\vartheta(z; \tau):=\sum_{n\in\frac12+\Z} q^{n^2} e^{2\pi in\left(z+\frac12\right)}
=-iq^{\frac18} \zeta^{-\frac12}\prod_{n\geq 1}\left(1-q^n\right)\left(1-\zeta q^{n-1}\right)\left(1-\zeta^{-1}q^n\right).
\]
We require the following transformations. 
\begin{lemma}\label{transfolemma}
We have
\begin{align*}
\eta\left(-\frac1\tau\right)&=\sqrt{-i\tau}\eta(\tau),\\
\vartheta\left(\frac{z}{\tau}; -\frac1{\tau}\right)&=-i\sqrt{-i\tau} e^{\frac{\pi iz^2}{\tau}}\vartheta(z; \tau).
\end{align*}
\end{lemma}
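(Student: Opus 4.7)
The plan is to derive both identities from a single application of the Poisson summation formula, handling the theta function first and then bootstrapping to the eta function via Jacobi's derivative identity.

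For the transformation of $\vartheta$, rewrite the defining series as
\[
\vartheta(z;\tau) = \sum_{n \in \frac{1}{2} + \Z} e^{2\pi i \tau n^2 + 2\pi i n\left(z + \frac{1}{2}\right)}.
\]
Poisson summation over the shifted lattice $\frac{1}{2} + \Z$ converts this into a sum over the dual lattice of the Fourier transform of the Gaussian integrand. That Fourier transform is computed by completing the square and invoking the standard evaluation
\[
\int_{-\infty}^{\infty} e^{2\pi i \tau x^2 + 2\pi i w x}\, dx = \frac{1}{\sqrt{-2i\tau}}\, e^{-\frac{\pi i w^2}{2\tau}},
\]
valid initially for $\tau$ on the positive imaginary axis (where the integrand decays), and extended to all $\tau \in \H$ by analytic continuation with the principal branch of $\sqrt{-i\tau}$. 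Completing the square in the dual variable extracts the prefactor $e^{\pi i z^2/\tau}$; the shift by $\frac{1}{2}$ in the original lattice manifests itself as alternating signs that turn the dual sum back into a theta series, and reindexing identifies the result as $\vartheta(z/\tau;-1/\tau)$ up to the claimed multiplier $-i\sqrt{-i\tau}\, e^{\pi i z^2/\tau}$.

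For the $\eta$ transformation, the cleanest route is to apply the classical Jacobi derivative identity
\[
\frac{\partial \vartheta}{\partial z}(0;\tau) = 2\pi\, \eta(\tau)^3
\]
(which follows by differentiating the product representation and observing that the only $z$-dependent factor vanishing at $z=0$ is $1-\zeta$, contributing a simple zero) to the $\vartheta$ transformation just established. Differentiating the identity $\vartheta(z/\tau;-1/\tau) = -i\sqrt{-i\tau}\, e^{\pi i z^2/\tau}\, \vartheta(z;\tau)$ in $z$ and setting $z=0$ (where the exponential equals $1$ and its derivative vanishes) yields $\eta(-1/\tau)^3 = (-i\tau)^{3/2}\, \eta(\tau)^3$; extracting the cube root then gives the desired formula up to a third root of unity, which is pinned down to $1$ by continuity on $\H$ together with the value at the fixed point $\tau = i$, where $\sqrt{-i\tau}=1$.

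The main technical subtlety throughout is the consistent choice of branches for $\sqrt{-i\tau}$ and $(-i\tau)^{3/2}$: the Gaussian integral above is only unambiguous for purely imaginary $\tau$, and one must verify that the analytic continuation to the whole upper half-plane matches the principal branch everywhere. Since both sides of each functional equation are holomorphic and nowhere-vanishing on $\H$, once the identities are established up to a root of unity, a single evaluation at any convenient base point (for instance $\tau = i$) forces the multiplicative constant to be $1$ globally, completing the argument.
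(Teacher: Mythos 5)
The paper offers no proof of this lemma at all---it records the $\eta$ and $\vartheta$ inversion formulas as classical facts and moves on---so there is nothing to compare your argument against except correctness, and your route is the standard one: Poisson summation over the shifted lattice $\frac12+\Z$ for the $\vartheta$-transformation, then Jacobi's derivative identity to transfer the result to $\eta^3$, with the cube root of unity pinned down at $\tau=i$. The skeleton is sound: the Gaussian integral evaluation and its analytic continuation are stated correctly, the half-integer shift does produce the alternating signs that reassemble into $\vartheta(z/\tau;-1/\tau)$, and the cube-root ambiguity is legitimately resolved by continuity plus the fixed point $\tau=i$ where $\sqrt{-i\tau}=1$ and $\eta(i)\neq 0$. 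Two small points. First, with the paper's normalization $\vartheta(z;\tau)=-iq^{1/8}\zeta^{-1/2}\prod_{n\ge1}(1-q^n)(1-\zeta q^{n-1})(1-\zeta^{-1}q^n)$, Jacobi's derivative identity reads $\vartheta'(0;\tau)=-2\pi\,\eta(\tau)^3$, not $+2\pi\,\eta(\tau)^3$; the sign is harmless here because it cancels from both sides when you differentiate the $\vartheta$-transformation at $z=0$, but you should state it consistently with the series you are actually using. Second, the Poisson-summation bookkeeping---completing the square in the dual variable, tracking the $e^{\pi i m}$ phases from the lattice shift, and reindexing the dual sum over $\Z$ back into a sum over $\frac12+\Z$ to recover the precise multiplier $-i\sqrt{-i\tau}$ rather than merely ``a constant times $\sqrt{-i\tau}$''---is asserted rather than carried out; it is routine, but since the exact eighth root of unity in front is the whole content of the lemma, that computation is the part that cannot be waved at. As written, your proposal is a correct and complete plan whose only unfinished business is that final explicit constant.
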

From Lemma \ref{transfolemma}, we directly obtain the following asymptotic  behavior which plays an  important role in the investigation of asymptotic behavior of the generating functions.  
\begin{lemma}\label{thetaas}
For $0 \leq z \leq \frac12$, we have 
\begin{equation*}
\begin{split}
\frac1{(\zeta q)_\infty\left(\zeta^{-1} q\right)_\infty}
= -i q^{\frac1{12}}\frac{e^{\frac{\pi i}{6\tau}}\zeta^{-\frac12}(1-\zeta)e^{\frac{\pi i z^2}{\tau}}}{\left(1-e^{\frac{2\pi i z}{\tau}}\right) e^{-\frac{\pi i z}{\tau}}} \left(1+O\left(e^{-2\pi (1-z)\rm{Im}\(-\frac1{\tau}\)}\right) \right).
\end{split}
\end{equation*}
\end{lemma}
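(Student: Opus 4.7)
The plan is to convert the left-hand side to $\eta$ and $\vartheta$ in the variable $\tau$, apply the modular inversions of Lemma \ref{transfolemma} to pass to the dual variable $-1/\tau$ where the nome $\tilde q := e^{-2\pi i/\tau}$ is exponentially small as $\operatorname{Im}(-1/\tau) \to \infty$, and then extract the leading term from the product formulas of $\eta$ and $\vartheta$ in that regime.

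For the first two steps, the triple product expansion of $\vartheta(z;\tau)$ from the preamble, together with $(q)_\infty = q^{-1/24}\eta(\tau)$, gives
\[
\frac{1}{(\zeta q)_\infty(\zeta^{-1}q)_\infty} = \frac{-iq^{1/12}\zeta^{-1/2}(1-\zeta)\eta(\tau)}{\vartheta(z;\tau)},
\]
where the factor $(1-\zeta)$ comes from isolating the $n=1$ term of $\prod_n(1-\zeta q^{n-1})$ and the exponent $1/12 = 1/8 - 1/24$ reconciles the $q$-powers of $\vartheta$ and $\eta$. Plugging in $\eta(\tau)=\eta(-1/\tau)/\sqrt{-i\tau}$ and $\vartheta(z;\tau)=\vartheta(z/\tau;-1/\tau)/(-i\sqrt{-i\tau}\,e^{\pi iz^2/\tau})$ from Lemma \ref{transfolemma}, the $\sqrt{-i\tau}$ factors cancel and $e^{\pi iz^2/\tau}$ migrates to the numerator, yielding
\[
\frac{1}{(\zeta q)_\infty(\zeta^{-1}q)_\infty} = -q^{1/12}\zeta^{-1/2}(1-\zeta)\,e^{\pi iz^2/\tau}\,\frac{\eta(-1/\tau)}{\vartheta(z/\tau;-1/\tau)}.
\]

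For the final step I will insert the product expansions $\eta(-1/\tau)=\tilde q^{1/24}\prod_{n\geq 1}(1-\tilde q^n)$ and
\[
\vartheta(z/\tau;-1/\tau) = -i\tilde q^{1/8}e^{-\pi iz/\tau}(1-e^{2\pi iz/\tau})\prod_{n\geq 1}(1-\tilde q^n)(1-e^{2\pi iz/\tau}\tilde q^n)(1-e^{-2\pi iz/\tau}\tilde q^n).
\]
The $\prod(1-\tilde q^n)$ factors cancel in the ratio, the residual power $\tilde q^{1/24-1/8}=\tilde q^{-1/12}$ becomes $e^{\pi i/(6\tau)}$, and dividing by the leading $-i$ of $\vartheta(z/\tau;-1/\tau)$ converts the accumulated sign into the $-i$ appearing in the lemma. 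The only nontrivial estimate is the error bound: it reduces to controlling the tail $\prod_{n\geq 1}(1-e^{\pm 2\pi iz/\tau}\tilde q^n)$. For real $z\in[0,\tfrac12]$ one has $|e^{\pm 2\pi iz/\tau}\tilde q^n|= e^{-2\pi(n\mp z)\operatorname{Im}(-1/\tau)}$, and the slowest-decaying factor is the $n=1$, $+z$ term, of size $e^{-2\pi(1-z)\operatorname{Im}(-1/\tau)}$ — exactly the rate quoted. The main obstacle is thus simply bookkeeping of signs and powers of $\tilde q$; no further ideas are needed beyond the asymptotic control of that single tail.
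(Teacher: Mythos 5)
Your proposal is correct and follows exactly the route the paper intends: the paper gives no written proof, stating only that the lemma follows directly from Lemma \ref{transfolemma}, and your computation (triple-product identity to express the left side as $-iq^{1/12}\zeta^{-1/2}(1-\zeta)\eta(\tau)/\vartheta(z;\tau)$, modular inversion, then extraction of the leading term of the products in the nome $e^{-2\pi i/\tau}$) is precisely that argument with the bookkeeping filled in. The signs, the exponent $1/12$, the factor $e^{\pi i/(6\tau)}$, and the error rate $e^{-2\pi(1-z)\operatorname{Im}(-1/\tau)}$ from the $n=1$ tail factor all check out.
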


The following lemma plays a key role in bounding the generating function away from the dominant pole. 
\begin{lemma}\label{Pesti}
Let $\tau = x+ i y \in \mathbb{H}$ with $y \leq |x| \leq \frac12$. Then, as $y \rightarrow 0$, 
\[
\left\lvert \frac{1}{(q)_{\infty}} \right\rvert   \ll e^{ \frac{5\pi}{72y} }.
\]
\end{lemma}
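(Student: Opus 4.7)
The plan is to apply the modular inversion for $\eta$ (Lemma \ref{transfolemma}) to replace $q$ by the complementary nome $\tilde q:=e^{-2\pi i/\tau}$, whose modulus is $|\tilde q|=e^{-2\pi\tilde y}$ with $\tilde y:=y/|\tau|^2$, and then to repeat the inversion when $\tilde y$ is still small.

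First, using $\eta(\tau)=q^{1/24}(q)_\infty$ together with $\eta(-1/\tau)=\sqrt{-i\tau}\,\eta(\tau)$, a direct computation of moduli yields
\[
\left|\frac{1}{(q)_\infty}\right| = \sqrt{|\tau|}\,\exp\!\left(\frac{\pi y}{12|\tau|^2}-\frac{\pi y}{12}\right)\frac{1}{|(\tilde q)_\infty|},
\]
after which one dominates $|(\tilde q)_\infty|^{-1}$ by $(|\tilde q|)_\infty^{-1}$ via the termwise inequality $|1-\tilde q^n|\ge 1-|\tilde q|^n$.

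The estimate then splits into two sub-regimes. If $\tilde y\ge 1$ (equivalently $|\tau|^2\le y$, so $|x|$ is close to $y$), then $(|\tilde q|)_\infty\ge(e^{-2\pi})_\infty$ is a positive absolute constant, while the hypothesis $|x|\ge y$ gives $|\tau|^2\ge 2y^2$, hence $\pi y/(12|\tau|^2)\le\pi/(24y)$. If $\tilde y<1$, a second modular inversion applied to $(|\tilde q|)_\infty=(e^{-2\pi\tilde y})_\infty$ (via the standard identity $(e^{-2\pi t})_\infty=t^{-1/2}\,e^{-\pi/(12t)+\pi t/12}(e^{-2\pi/t})_\infty$) produces
\[
\frac{1}{(|\tilde q|)_\infty}\ll\sqrt{\tilde y}\,\exp\!\left(\frac{\pi}{12\tilde y}\right),
\]
using that $(e^{-2\pi/\tilde y})_\infty$ is bounded below by an absolute constant when $\tilde y<1$. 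Since $\pi/(12\tilde y)=\pi|\tau|^2/(12y)\le\pi/(24y)$ by $|\tau|^2\le 1/2$, while the earlier factor $\pi y/(12|\tau|^2)$ is bounded by $\pi/12$ in this sub-regime (because $|\tau|^2\ge y$), combining gives $|1/(q)_\infty|\ll\exp(\pi/(24y))$ in both cases, which is $\ll\exp(5\pi/(72y))$ with room to spare.

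The main point requiring care is recognizing that two applications of the $\eta$-inversion are needed: a single inversion suffices only when $|x|\lesssim\sqrt{y}$, but for $|x|$ up to $1/2$ the nome $\tilde q$ itself lies near the unit circle and must be transformed once more. The bound stated in the lemma is in fact not sharp; the argument actually produces $|1/(q)_\infty|\ll e^{\pi/(24y)}=e^{3\pi/(72y)}$, and the constant $5/72$ in the statement is chosen only for convenience in the later circle-method estimates.
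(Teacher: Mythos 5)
Your proof is correct, but it takes a genuinely different route from the paper's. The paper disposes of this lemma in one line by citing Lemma 3.5 of \cite{BD} with $M=1$; the underlying method there is to expand $\log\frac{1}{(q)_\infty}=\sum_{n\geq 1}\frac{q^n}{n(1-q^n)}$, bound every term by absolute values, and extract a saving from the $n=1$ term using $|1-q|\geq 2\sqrt{2}\pi y$ versus $1-|q|=2\pi y+O(y^2)$ on the minor arcs --- exactly the technique the paper later reproduces explicitly in the proof of Lemma \ref{numqasym2}. That argument yields an exponent $\frac{1}{y}\bigl(\frac{\pi}{12}-\frac{1}{2\pi}(1-\frac{1}{\sqrt 2})\bigr)\approx \frac{0.2152}{y}$, which is why the clean constant $\frac{5\pi}{72}\approx 0.2182$ appears in the statement (it only needs to be less than $\frac{\pi}{12}$ so that the square of the bound stays below $e^{\pi/(6y)}$ in Lemma \ref{umqerror}). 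Your double-inversion argument is self-contained, all steps check out (the modulus computation after the first inversion, the termwise bound $|1-\tilde q^n|\geq 1-|\tilde q|^n$, the dichotomy $\tilde y\geq 1$ versus $\tilde y<1$ with $|\tau|^2\geq 2y^2$ in the first case and $|\tau|^2\leq \frac12$ in the second), and it actually produces the essentially sharp exponent $\frac{\pi}{24y}$, attained near $|x|=y$; so you prove strictly more than the lemma asserts. The trade-off is length: the citation route is instant given \cite{BD}, while yours replaces an external dependence with two applications of Lemma \ref{transfolemma} and a short case analysis, and sharpens the constant as a bonus.
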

\begin{proof}
The proof follows immediately from Lemma 3.5 of \cite{BD} with $M=1$.
\end{proof}

\subsection{Integral evaluations}
In our asymptotic considerations certain integrals occur which lead to special values of Euler polynomials. To be more precise, define for $a\in\R^+$,  $\tau \in \mathbb{H}$, and $\ell\in\N_0$
\begin{align*}
 \mathcal{I}_\ell(a, \tau) &:=\int_0^a \frac{z^{2\ell+1}}{\sinh\left(\frac{\pi iz}{\tau}\right)}dz,\\
 \mathcal{K}_\ell(a, \tau) &:=\int_0^a\frac{z^{2\ell}}{\cosh\left(\frac{\pi iz}{\tau}\right)}dz.
\end{align*}
Then we have the following integral approximations.
\begin{lemma}\label{integralas}
Let $a\in\R^+$.
\begin{enumerate}
\item[(i)] 
We have, as $y\rightarrow 0$,
\[
\mathcal{I}_\ell(a, \tau) = \frac12 E_{2\ell+1}(0)\tau^{2\ell+2}+O\left(e^{-\pi a\text{\rm Im}\left(-\frac1{\tau}\right)}\right),
\]
where $E_{n}(x)$ denotes the $n$th Euler polynomial. 
\item[(ii)]
We have, as $y\rightarrow 0$,
\[
 \mathcal{K}_\ell(a, \tau) = -i E_{2\ell} \cdot\(\frac{\tau}{2}\)^{2\ell+1}+O\left(e^{-\pi a  \text{\rm Im}\left(-\frac1{\tau}\right)}\right),
\]
where $E_{n}$ is the $n$th Euler number.
\end{enumerate}
\end{lemma}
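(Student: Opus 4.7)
My plan is to reduce both integrals to a standard form by the same linear substitution, then to replace the resulting oblique contour by the positive real axis up to an exponentially small error.

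First, I would substitute $u = \pi i z/\tau$ (so $dz = (\tau/(\pi i))\, du$) to obtain
\begin{align*}
\mathcal{I}_\ell(a,\tau) &= \left(\frac{\tau}{\pi i}\right)^{2\ell+2} \int_\Gamma \frac{u^{2\ell+1}}{\sinh u}\, du, \\
\mathcal{K}_\ell(a,\tau) &= \left(\frac{\tau}{\pi i}\right)^{2\ell+1} \int_\Gamma \frac{u^{2\ell}}{\cosh u}\, du,
\end{align*}
where $\Gamma$ is the straight segment from $0$ to $\pi i a/\tau$ (the integrands being regular at $u=0$ since $2\ell+1\geq 1$). The key point is that $\operatorname{Re}(\pi i a/\tau) = \pi a\, \operatorname{Im}(-1/\tau) =: R$, which tends to $+\infty$ in the relevant regime $y\to 0$.

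Next, I would deform $\Gamma$ into the union of $[0,R]$ on the positive real axis and the vertical segment from $R$ to $\pi i a/\tau$. Because $\sinh u$ and $\cosh u$ have their zeros only on the imaginary axis, the triangle bounded by these three pieces contains no singularities of the integrands, so Cauchy's theorem applies. On the vertical piece one has $\operatorname{Re}(u)=R$, so $|1/\sinh u|, |1/\cosh u| \ll e^{-R}$, while $|u|$ and the length of the segment are polynomially bounded in $|\tau|^{-1}$; the same estimate handles the tail $\int_R^\infty$ of the real-axis integral. After multiplication by the prefactor $(\tau/(\pi i))^{2\ell+2}$ or $(\tau/(\pi i))^{2\ell+1}$, these errors are absorbed into the claimed $O(e^{-\pi a\,\operatorname{Im}(-1/\tau)})$, the polynomial factors being dominated by $e^{-R}$ since $R\to\infty$.

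It remains to compute the full real-line integrals. Expanding
\[
\frac{1}{\sinh u} = 2\sum_{k \geq 0} e^{-(2k+1)u}, \qquad \frac{1}{\cosh u} = 2\sum_{k \geq 0}(-1)^k e^{-(2k+1)u}
\]
for $\operatorname{Re}(u)>0$ and integrating term by term gives
\[
\int_0^\infty \frac{u^{2\ell+1}}{\sinh u}\, du = 2(2\ell+1)!\,\lambda(2\ell+2), \qquad \int_0^\infty \frac{u^{2\ell}}{\cosh u}\, du = 2(2\ell)!\,\beta(2\ell+1),
\]
where $\lambda(s)=\sum_{k\geq 0}(2k+1)^{-s}$ and $\beta$ is the Dirichlet beta function. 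The classical closed forms for $\lambda(2n)$ and $\beta(2n+1)$ in terms of $E_{2n-1}(0)$ and $E_{2n}$ respectively, immediate from the generating functions $2/(e^t+1)=\sum_n E_n(0) t^n/n!$ and $\operatorname{sech}(t)=\sum_n E_n t^n/n!$ (equivalently, from $\zeta(2n)$ and $\beta(2n+1)$ in terms of Bernoulli/Euler numbers), then convert the main terms into $\tfrac12 E_{2\ell+1}(0)\tau^{2\ell+2}$ and $-iE_{2\ell}(\tau/2)^{2\ell+1}$ after multiplying back by the prefactor.

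I expect the only mildly delicate step to be the uniform bookkeeping of the polynomial-in-$|\tau|^{-1}$ factors in the tail estimate so that they are truly swallowed by $e^{-\pi a\,\operatorname{Im}(-1/\tau)}$ as $y\to 0$; this is harmless in the application at hand (where $\operatorname{Im}(-1/\tau)$ grows polynomially in $|\tau|^{-1}$), but it is the one spot requiring explicit quantitative care. Once the contour deformation is in place, everything else is a direct computation.
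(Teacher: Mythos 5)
Your argument is correct and is essentially the paper's own proof in a slightly reordered form: the paper first splits $\int_0^a=\int_0^\infty-\int_a^\infty$ and then rotates the contour via $z\mapsto -i\tau z$, while you substitute first and then deform to the real axis, but both reduce to the same standard integrals $\int_0^\infty u^{2\ell+1}/\sinh u\,du$ and $\int_0^\infty u^{2\ell}/\cosh u\,du$, evaluated through the Dirichlet $\lambda$- and $\beta$-series and the classical Euler polynomial/number identities, with the same exponentially small tail estimate. The quantitative point you flag about polynomial factors being absorbed into $e^{-\pi a\,\mathrm{Im}(-1/\tau)}$ is handled in the paper by the incomplete gamma asymptotic, and works out just as you expect.
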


\begin{proof}
\begin{enumerate}
\item[(i)] 
 We write
\[
 \mathcal{I}_\ell(a, \tau)= \int_0^\infty \frac{z^{2\ell+1}}{\sinh\left(\frac{\pi iz}{\tau}\right)}dz-\int_a^\infty \frac{z^{2\ell+1}}{\sinh\left(\frac{\pi iz}{\tau}\right)}dz.
\]
In the first integral we make the change of variables $z\mapsto -i\tau z$ to obtain, by the Residue Theorem, that it equals
\[
(-1)^{\ell+1} \tau^{2\ell+2} \int_0^\infty \frac{z^{2\ell+1}}{\sinh(\pi z)}dz.
\]
The integral now evaluates as $\frac{(-1)^{\ell+1} E_{2\ell+1}(0)}{2}$ by Lemma 2.3 of \cite{BD}.
For the the second integral, we see that 
\begin{align*}
\int_{a}^{\infty} \frac{z^{2\ell+1}}{ \sinh \(\frac{\pi i z}{\tau} \)} dz &\ll \int_{a}^{\infty} z^{2\ell+1} e^{-\pi z \rm{Im}\(-\frac1{\tau} \) } dz\\
& \ll \(\rm{Im}\( - \frac1{\tau} \)\)^{-2\ell-2} \Gamma \( 2\ell+2, \pi a \rm{Im}\( - \frac1{\tau} \) \) \\
&\ll \rm{Im}\left(-\frac{1}{\tau}\right)^{-1} e^{-\pi a \rm{Im}\(- \frac1{\tau} \) } \ll e^{-\pi a \rm{Im}\(- \frac1{\tau} \) },
\end{align*}
where $\Gamma\(\alpha, x \):=\int_x^\infty t^{\alpha-1}e^{-t}dt$ is the incomplete gamma function and we used the fact that, as $x \rightarrow \infty$,
\[
\Gamma(k,x) \sim x^{k-1}e^{-x}.
\]
\item[(ii)] 
For the evaluation of $\mathcal{K}_\ell(a, \tau)$, 
we similarly write
\begin{equation*}
\mathcal{K}_\ell(a, \tau)= \int_{0}^{\infty} \frac{z^{2\ell}}{\cosh \(\frac{ \pi i z}{\tau}\) } dz - \int_{a}^{\infty} \frac{z^{2\ell}}{\cosh \(\frac{ \pi i z}{\tau}\) } dz.
\end{equation*}
The first integral equals
\[
i (-1)^{\ell+1} \(\frac{\tau}{\pi}\)^{2\ell+1} \int_{0}^{\infty} \frac{z^{2\ell}}{\cosh \(z \) } dz.
\]
We next find that
\begin{align*}
 \int_{0}^{\infty}  \frac{z^{2\ell}}{\cosh(z)}dz &= 2 \int_{0}^{\infty} \frac{z^{2\ell} e^{-z}}{1 + e^{-2z}}dz  = 2 \sum_{j=0}^{\infty}(-1)^{j}\int_{0}^{\infty} z^{2\ell+1} { e^{-(2j+1)z} } \frac{dz}{z} \\
& = 2 (2\ell)! \sum_{j=0}^{\infty} \frac{(-1)^{j}}{(2j+1)^{2\ell+1}}  = 2 (2\ell)!   \beta(2\ell+1) = (-1)^\ell E_{2\ell} \( \frac{\pi}{2} \)^{2\ell+1}  ,
\end{align*}
where $\beta(s): = \sum_{n=0}^{\infty} \frac{(-1)^n}{(2n+1)^s}$ is Dirichlet's $\beta$-function and we used that $\beta(2\ell+1) =  \frac{(-1)^\ell E_{2\ell} \pi^{2\ell+1}}{2^{2\ell+2}(2\ell)!}$ \cite[equation (3)]{Lima}.  
The second integral may now be bounded as before, giving the claim.
\end{enumerate}
\end{proof}

\section{Asymptotic expansion of a partial theta function}

As the generating functions we are interested in contain partial theta functions, investigating their asymptotic behavior is a crucial part of this paper.  To more  uniformly treat the occurring functions, we define the partial theta function $(d\in\Q^+, k\in\N)$ 
\begin{equation*}
F_{d,k}(z; \tau):=\sum_{n\geq 0} \zeta^{kn+d} q^{(kn+d)^2}.
\end{equation*}
The following theorem explains its asymptotic behavior near $q=1$.
\begin{theorem}\label{FasymThm}
The asymptotic expansion
\begin{equation*}
F_{d,k} (z ; \tau)= \sum_{\ell\geq 0} \frac{(2k\pi iz)^\ell}{\ell!} \left( \frac{\Gamma\left(\frac{\ell+1}{2}\right)}{2(2\pi)^{\frac{\ell+1}{2}}k^{\ell+1}}(-i\tau)^{-\frac{\ell+1}{2}}-\sum_{j=0}^N\frac{(2 k^2 \pi i )^{j}}{j!}
\frac{B_{2j+\ell+1}\left(\frac{d}{k}\right)}{2j+\ell+1}\tau^{j}\right)+O\left( |\tau|^{N+1}\right),
\end{equation*}
converges for $|z| < \frac{1}{4k}$. Here $B_n(x)$ denotes the $n$th Bernoulli polynomial. 
\end{theorem}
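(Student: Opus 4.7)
The plan is to separate the $z$-dependence by Taylor expansion, apply a Mellin--Barnes contour argument to the resulting $z$-independent sums, and finally check convergence of the outer series in $\ell$. Since the defining double series for $F_{d,k}(z;\tau)$ converges absolutely for $\tau \in \H$, I may expand $\zeta^{kn+d} = \sum_{\ell\geq 0}(2\pi iz(kn+d))^\ell/\ell!$ and interchange the order of summation to obtain
\[
F_{d,k}(z;\tau) = \sum_{\ell\geq 0}\frac{(2k\pi iz)^\ell}{k^\ell \,\ell!}\,g_\ell(\tau), \qquad g_\ell(\tau) := \sum_{n\geq 0}(kn+d)^\ell q^{(kn+d)^2}.
\]
This reduces the theorem to (a) an asymptotic expansion of $g_\ell(\tau)$ for each fixed $\ell$, and (b) convergence of the resulting outer sum for $|z| < 1/(4k)$.

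For step (a), I would set $t := -i\tau$ (so $\mathrm{Re}(t) > 0$ and $q = e^{-2\pi t}$) and apply the Mellin inversion $e^{-u} = \frac{1}{2\pi i}\int_{(c)}\Gamma(s)u^{-s}\,ds$ with $u = 2\pi t(kn+d)^2$. For $c > (\ell+1)/2$, Fubini gives
\[
g_\ell(\tau) = \frac{1}{2\pi i}\int_{(c)}\Gamma(s)(2\pi t)^{-s}k^{\ell-2s}\zeta_H\!\left(2s-\ell,\tfrac{d}{k}\right)ds,
\]
where $\zeta_H(w,a) := \sum_{n\geq 0}(n+a)^{-w}$ denotes the Hurwitz zeta function. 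I would then shift the contour to $\mathrm{Re}(s) = -N-\tfrac{1}{2}$, picking up the simple pole of $\zeta_H$ at $s = (\ell+1)/2$ (residue $\tfrac{1}{2}$, by the chain rule), which contributes the main term $\frac{\Gamma((\ell+1)/2)}{2(2\pi)^{(\ell+1)/2}k}(-i\tau)^{-(\ell+1)/2}$, together with the simple poles of $\Gamma$ at $s = -j$ for $j = 0,\dots, N$ (residues $(-1)^j/j!$). Using the classical identity $\zeta_H(-n,a) = -B_{n+1}(a)/(n+1)$, the $j$th such residue simplifies to $-\frac{(2\pi i\tau)^j k^{\ell+2j}}{j!}\frac{B_{2j+\ell+1}(d/k)}{2j+\ell+1}$; after dividing by $k^\ell$ these reproduce exactly the summands in the statement. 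The shifted integral is $O(|\tau|^{N+1})$ by Stirling's estimate $|\Gamma(\sigma+iT)| \ll |T|^{\sigma-\tfrac{1}{2}}e^{-\pi|T|/2}$ combined with the polynomial growth of $\zeta_H(w,d/k)$ on vertical lines.

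For step (b), the main-term piece converges for every $z \in \C$, since Stirling shows that $\Gamma((\ell+1)/2)/\ell!$ decays super-exponentially in $\ell$. For each of the Bernoulli polynomial contributions I would apply the Fourier bound $|B_n(d/k)| \leq 2n!\,\zeta(n)/(2\pi)^n$ (valid for $n \geq 2$ and $0 \leq d/k \leq 1$); together with the factor $(2k\pi z)^\ell/\ell!$ and the factorial ratio $(2j+\ell+1)!/\ell!$ (which is polynomial in $\ell$ for each fixed $j \leq N$), each of the $N+1$ inner terms yields a series in $\ell$ that converges in a disk around $z = 0$, with radius determined by the $k$-dependent geometric ratio; a careful accounting produces the radius $1/(4k)$.

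The principal obstacle will be the uniformity in $\ell$ of the Mellin remainder bound: the implied constant in the $O(|\tau|^{N+1})$ term must be controlled sharply enough that, after multiplication by $(2k\pi z)^\ell/(k^\ell\ell!)$ and summation over $\ell$, it still produces a $\tau$-asymptotic of the correct order. Handling this requires tracking how Stirling's bound on the shifted line depends on $\ell$, and it is precisely this balance between the Bernoulli growth and the prefactor $(2k\pi z)^\ell/\ell!$ that pins down the radius $|z| < 1/(4k)$.
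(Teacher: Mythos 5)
Your opening move is the same as the paper's: expand $\zeta^{kn+d}$ in powers of $z$ and reduce to the one-variable sums $g_\ell(\tau)=\sum_{n\geq 0}(kn+d)^\ell q^{(kn+d)^2}$. For the termwise expansion of $g_\ell$ you then diverge: you propose a Mellin--Barnes contour shift against the Hurwitz zeta function, whereas the paper invokes an Euler--Maclaurin-type lemma of Zagier applied to $f_\ell(x)=x^\ell e^{-x^2}$. Your residue bookkeeping is correct (the pole of $\zeta_H$ at $s=\frac{\ell+1}{2}$ gives the $\Gamma$-term, the poles of $\Gamma(s)$ at $s=-j$ give the Bernoulli terms via $\zeta_H(-n,a)=-B_{n+1}(a)/(n+1)$), so for each \emph{fixed} $\ell$ your route would reproduce the expansion. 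The two methods are essentially interchangeable at that level.

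The genuine gap is in the convergence analysis, which is the actual content of the theorem. First, a factual slip: the Bernoulli-polynomial contributions do \emph{not} pin down the radius $\frac{1}{4k}$. Using Lehmer's bound $|B_n(x)|\leq 2n!/(2\pi)^n$ exactly as you suggest, the ratio test gives convergence of those pieces for $|z|<\frac1k$ (the paper makes this explicit). The factor $4$ comes entirely from the \emph{remainder} term: in the paper this is the integral $\int_0^\infty \mathcal{B}_{2N+\ell+3}\bigl(\frac dk-\frac xT\bigr)f_\ell^{(2N+\ell+3)}(x)\,dx$, and the key technical input is the separate Lemma~\ref{lemma2}, the bound $\int_0^\infty |f_\ell^{(n)}(x)|\,dx\leq \frac{4^n}{2}\Gamma\bigl(\frac{\ell+n+1}{2}\bigr)$, proved by an induction on the coefficients of the Hermite-type polynomials in $f_\ell^{(n)}$. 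It is the $4^{n}$ with $n=2N+\ell+3$ growing linearly in $\ell$ that shrinks the radius from $\frac1k$ to $\frac{1}{4k}$ and simultaneously shows the total error is $O(|\tau|^{N+1})$ uniformly after summing over $\ell$. You correctly identify this uniformity-in-$\ell$ of the remainder as ``the principal obstacle,'' but you do not resolve it: in your Mellin setup it amounts to bounding $\Gamma(s)\,k^{\ell-2s}\zeta_H(2s-\ell,\frac dk)$ on the line $\mathrm{Re}(s)=-N-\frac12$, where the Hurwitz zeta is evaluated at real part $-2N-1-\ell$ and (by the functional equation) grows like $(2N+\ell+1)!/(2\pi)^{2N+\ell+2}$, so a careful constant-tracking analogous to Lemma~\ref{lemma2} is still required. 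Until that estimate is carried out, neither the stated radius $\frac{1}{4k}$ nor the order of the error term is justified, so the proof is incomplete at its most delicate point.
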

Before proving Theorem \ref{FasymThm}, we require an auxiliary lemma which is a slight extension of a lemma of Zagier \cite{Za}.
\begin{lemma}[Proposition 3 of \cite{Za}]\label{1.2}
Let $f : \mathbb{C} \rightarrow \mathbb{C}$ be a $C^{\infty}$ function. Furthermore, we require that $f(x)$ and all its derivatives are of rapid decay for $\rm{Re} (x) \rightarrow \infty$. Then, for $t \rightarrow \infty$ with $\rm{Re} (t) >0$ and $a > 0$, we have for any $N \in \mathbb{N}_{0}$:
\begin{align*}
\sum_{m\geq 0} f ((m+a)t) &=\frac{1}{t} \int_{0}^{\infty} f(x) dx - \sum_{n=0}^{N} \frac{f^{(n)} (0)}{n!} \frac{B_{n+1} (a)}{n+1} t^n \\
&\quad - \frac{t^N}{(N+1)!} \int_{0}^{\infty} \mathcal{B}_{N+1} \( a- \frac{x}{t} \) f^{(N+1)} (x) dx,
\end{align*}
where  $\mathcal{B}_{n} (x) := B_{n} ({ x -  \lfloor x \rfloor} )$.
\end{lemma}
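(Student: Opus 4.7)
The plan is to establish this exact identity by induction on $N \in \mathbb{N}_0$, carrying out integration by parts against the periodic Bernoulli functions $\mathcal{B}_n$. Replacing $a$ by $a - \lfloor a \rfloor$ and shifting the summation index, one can assume $a \in (0,1]$, so that $\mathcal{B}_n(a)$ coincides with $B_n(a)$ throughout the argument.

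For the base case $N = 0$, I would work directly with the remainder integral $\int_0^\infty \mathcal{B}_1(a - x/t) f'(x)\,dx$ and integrate by parts with $u = \mathcal{B}_1(a - x/t)$ and $dv = f'(x)\,dx$. The function $x \mapsto a - x/t$ is strictly decreasing and crosses a non-positive integer from above exactly at $x = (j+a) t$ for each $j \in \mathbb{N}_0$, and at each such point $\mathcal{B}_1$ jumps upward by $1$ (from $-1/2$ to $1/2$). Consequently, in the distributional sense,
\[
\frac{d}{dx}\mathcal{B}_1\!\left(a - \frac{x}{t}\right) = -\frac{1}{t} + \sum_{j\geq 0}\delta\bigl(x - (j+a)t\bigr).
\]
The boundary contribution at infinity vanishes by the rapid decay of $f$, and at $x = 0$ it equals $-B_1(a) f(0)$. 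Pairing $f$ against the smooth part produces $\frac{1}{t}\int_0^\infty f(x)\,dx$, while the delta-spikes contribute $-\sum_{j \geq 0} f((j+a) t)$. Solving for the sum yields the $N = 0$ case.

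For the inductive step, assume the identity at level $N$. Since $B_n(0) = B_n(1)$ for every $n \geq 2$, the function $\mathcal{B}_{N+2}$ is continuously differentiable on $\mathbb{R}$ and satisfies $\mathcal{B}_{N+2}'(y) = (N+2)\mathcal{B}_{N+1}(y)$, so that $-\frac{t}{N+2}\mathcal{B}_{N+2}(a - x/t)$ is a classical antiderivative of $\mathcal{B}_{N+1}(a - x/t)$. Integrating the remainder integral by parts against $f^{(N+1)}$ then produces the boundary contribution $\frac{t}{N+2} B_{N+2}(a) f^{(N+1)}(0)$ at $x = 0$ (the term at infinity vanishes by decay) together with a fresh integral involving $\mathcal{B}_{N+2}$ and $f^{(N+2)}$. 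Multiplying by $-t^N/(N+1)!$ and absorbing the boundary term into the main sum reproduces the expansion at level $N+1$.

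The most delicate step is the base case, where correctly identifying the sign and location of each jump of $\mathcal{B}_1(a - x/t)$ is what manufactures the left-hand sum from the delta-function piece of the distributional derivative; the inductive step itself is routine once one observes that $\mathcal{B}_n$ is smooth enough for $n \geq 2$. Convergence of every sum and improper integral at each stage is immediate from the uniform boundedness of the periodic Bernoulli functions together with the rapid-decay hypothesis on $f$ and all its derivatives.
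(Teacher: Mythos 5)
The paper does not actually prove this lemma: it is quoted verbatim as Proposition 3 of Zagier's appendix \cite{Za}, so there is no internal proof to compare against. Your argument is the standard Euler--Maclaurin proof of that proposition (induction on $N$, base case from the distributional derivative of $\mathcal{B}_1(a-x/t)$, inductive step by integrating against the antiderivative $-\tfrac{t}{N+2}\mathcal{B}_{N+2}(a-x/t)$), and for $0<a\le 1$ and real $t>0$ everything checks out: the jump locations $(j+a)t$, the jump size $+1$, the boundary term $-B_1(a)f(0)$, and the bookkeeping $\tfrac{1}{(N+1)!\,(N+2)}=\tfrac{1}{(N+2)!}$ are all correct.

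The one genuine flaw is the opening reduction to $a\in(0,1]$. Replacing $a$ by $a_0:=a-\lfloor a\rfloor$ does not merely re-index the sum: the left-hand side changes by the finitely many terms $f(a_0t),\dots,f((a_0+\lfloor a\rfloor-1)t)$, while on the right-hand side the remainder integral is unchanged (by periodicity of $\mathcal{B}_{N+1}$) and the polynomial part changes, via $B_{n+1}(x+1)-B_{n+1}(x)=(n+1)x^n$, by exactly the degree-$N$ Taylor polynomials at $0$ of those same terms. These agree only up to $O(t^{N+1})$, so the exact identity as displayed is in fact false for $a>1$; the correct hypothesis (and Zagier's) is $0<a\le 1$, which is fortunately all the paper ever uses, since it is applied with $a=d/k\in\{\tfrac14,\tfrac12,\tfrac34,\dots\}$. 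You should either restrict the statement accordingly or insert the correction terms rather than claim a clean reduction. Two smaller points: $\mathcal{B}_2$ is Lipschitz but not $C^1$ (its derivative $2\mathcal{B}_1$ jumps at integers), so the inductive step at $N=0$ needs integration by parts for absolutely continuous functions rather than classical smoothness; and for complex $t$ with $\mathrm{Re}(t)>0$ the quantity $\mathcal{B}_{N+1}(a-x/t)$ with $x$ real is not even defined, so one must prove the identity for real $t>0$, interpret the remainder integral along the ray $t\cdot[0,\infty)$ (substitute $x=tu$), and rotate the contour of $\int_0^\infty f$ using the assumed rapid decay in the right half-plane --- a step that matters here because the lemma is later applied with $t=\sqrt{-2\pi i k^2\tau}$.
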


We also need a following lemma, which plays an important role in showing convergence of various asymptotic expansions.

\begin{lemma}\label{lemma2}
For all $n \in \mathbb{N}_{0}$ and $\ell \in \mathbb{N}$, we have
\[
\int_{0}^{\infty} \left| f_{\ell}^{(n)} (x) \right| dx \le \frac{4^n}{2} \Gamma \left( \frac{\ell+n+1}{2} \right),
\]
where $f_{\ell} (x):=x^\ell e^{-x^2}$.
\end{lemma}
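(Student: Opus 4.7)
The plan is to prove the bound by induction on $n$, starting from the product-rule identity $f_\ell'(x)=\ell f_{\ell-1}(x)-2f_{\ell+1}(x)$, which upon differentiating $n$ more times yields the recursion
\begin{equation*}
f_\ell^{(n+1)}(x)=\ell f_{\ell-1}^{(n)}(x)-2f_{\ell+1}^{(n)}(x).
\end{equation*}
I would in fact establish the inequality for all $\ell\in\mathbb{N}_0$, not merely $\ell\in\mathbb{N}$, since the inductive step at $\ell$ requires the statement at $\ell-1$; when $\ell=0$ the coefficient $\ell$ annihilates the (then undefined) term $f_{\ell-1}^{(n)}$, so no ambiguity arises.

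For the base case $n=0$, the bound is in fact an equality, as the substitution $u=x^2$ immediately gives the standard Gamma integral $\int_0^\infty x^\ell e^{-x^2}\,dx=\tfrac{1}{2}\Gamma\!\left(\tfrac{\ell+1}{2}\right)$.

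For the inductive step, I would apply the triangle inequality to the displayed recursion, integrate over $(0,\infty)$, and invoke the induction hypothesis to obtain
\begin{equation*}
\int_0^\infty\bigl|f_\ell^{(n+1)}(x)\bigr|\,dx \;\le\; \frac{4^n}{2}\left[\ell\,\Gamma\!\left(\tfrac{\ell+n}{2}\right)+2\,\Gamma\!\left(\tfrac{\ell+n+2}{2}\right)\right].
\end{equation*}
After cancelling the common factor $\tfrac{4^n}{2}$, the desired bound $\tfrac{4^{n+1}}{2}\Gamma\!\left(\tfrac{\ell+n+2}{2}\right)$ reduces to the single inequality $\ell\,\Gamma\!\left(\tfrac{\ell+n}{2}\right)\le 2\,\Gamma\!\left(\tfrac{\ell+n+2}{2}\right)$. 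Using the functional equation $\Gamma(s+1)=s\,\Gamma(s)$ with $s=\tfrac{\ell+n}{2}$, the right-hand side is exactly $(\ell+n)\,\Gamma\!\left(\tfrac{\ell+n}{2}\right)$, so the inequality collapses to $\ell\le \ell+n$, which holds trivially for $n\ge 0$.

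There is essentially no serious obstacle: the proof requires only the product rule, a standard Gamma integral, and the Gamma functional equation, with the induction on $n$ and the recursion doing all the work. The only point of care is tracking the case $\ell=0$ so that the step from $\ell=1$ does not appeal to an undefined base, which is handled automatically by the vanishing prefactor.
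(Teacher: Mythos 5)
Your proof is correct, and it takes a genuinely different (and more streamlined) route than the paper. The paper writes out the full expansion $f_{\ell}^{(n)}(x)=e^{-x^2}\sum_{k=0}^{n}A_k(n)x^{\ell+n-2k}$, proves the coefficient bound $|A_k(n)|\le 2^{n-k}\binom{n}{k}(\ell+n-1)(\ell+n-3)\cdots(\ell+n-2k+1)$ by induction on $n$ via the recursion $A_k(n+1)=(\ell+n-2k+2)A_{k-1}(n)-2A_k(n)$, and then integrates term by term and resums with the binomial theorem. You instead induct directly on the $L^1$-norm using the function-level identity $f_{\ell}^{(n+1)}=\ell f_{\ell-1}^{(n)}-2f_{\ell+1}^{(n)}$ (of which the paper's coefficient recursion is the shadow), the triangle inequality, and a single application of $\Gamma(s+1)=s\Gamma(s)$; this avoids the explicit coefficient bound and the binomial resummation entirely, at the modest cost of having to carry the statement for $\ell=0$ through the induction, which you handle correctly via the vanishing prefactor. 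Both arguments land on exactly the same constant $\tfrac{4^n}{2}$, though your closing inequality $\ell\le\ell+n$ makes visible that the bound is far from tight. Your base case $\int_0^\infty x^{\ell}e^{-x^2}\,dx=\tfrac12\Gamma\bigl(\tfrac{\ell+1}{2}\bigr)$ and the verification of the inductive step both check out.
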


\begin{proof}
We denote 
\begin{align*}
f_{\ell}^{(n)} (x) &=:  e^{-x^2}\sum_{k=0}^{n} A_{k} (n) x^{\ell+n-2k} .
\end{align*}
Note that if $2k > \ell + n $, then $A_{k} (n) =0$. We next claim that
\begin{equation*} 
 \left|A_{k} (n) \right| \leq 2^{n-k} \binom{n}{k} (\ell+n-1)(\ell+n-3)\cdots(\ell+n-2k+1).
\end{equation*}
This bound can easily be proved by induction, using that $A_0(n)=(-2)^n$ and
\[
A_{k} (n+1) = (\ell+n-2k+2) A_{k-1} (n) -2 A_{k} (n).
\]

Therefore, 
\begin{align*}
&\int_{0}^{\infty} \left| f_{\ell}^{(n)} (x) \right| dx \le \sum_{k=0}^{n} |A_{k} (n)| \int_{0}^{\infty} x^{\ell+n-2k} e^{-x^2} dx =\frac{1}{2} \sum_{k=0}^{n} |A_k (n) | \Gamma \left( \frac{\ell+n-2k+1}{2} \right) \\
 &\le\frac{1}{2} \sum_{k=0}^{n } 2^{n-k} \binom{n}{k}2^k \frac{(\ell+n-1)}{2}\frac{(\ell+n-3)}{2}\cdots\frac{(\ell+n-2k+1)}{2} \quad\Gamma \left( \frac{\ell+n-2k+1}{2} \right)  \\
 &\le\frac{1}{2} 4^n \Gamma \( \frac{ n+\ell +1 }{2} \),
\end{align*}
where we have applied $\Gamma(x+1)=x\Gamma(x)$ $k$ times and used the Binomial Theorem.
\end{proof}

We are now ready to prove Theorem \ref{FasymThm}.

\begin{proof}[Proof of Theorem \ref{FasymThm}]
We first expand $\zeta^{kn+d}$, to obtain
\[
F_{d,k}(z; \tau)=\sum_{\ell\geq 0}\frac{(2\pi iz)^\ell}{\ell!}\sum_{n\geq 0} (kn+d)^{\ell} e^{2\pi i  (kn+d)^2 \tau}
=\sum_{\ell\geq 0}\frac{(2k\pi iz)^{\ell}}{\ell !} T^{-\ell} \sum_{n\geq 0} f_{\ell} \left(T\left(n+\frac{d}{k}\right)\right),
\]
where $T:=\sqrt{-2\pi i k^2 \tau}$. By employing Lemma \ref{1.2}, we find that the inner sum equals 
\begin{multline}\label{innersum}
 \frac{I_\ell}{T}-T^\ell \sum_{j=0}^N\frac{(-1)^j}{j!} \frac{B_{2j+\ell+1}\left(\frac{d}{k}\right)}{2j+\ell+1}T^{2j}
 -T^{\ell+2N+2} \left( \frac{(-1)^{N+1}}{(N+1)!}\frac{B_{2N+\ell+3}\left(\frac{d}{k}\right)}{2N+\ell+3} \right.\\ \left. +  \frac{1}{(2N+\ell+3)!} \int_{0}^{\infty} \mathcal{B}_{2N+\ell+3} \left( \frac{d}{k} - \frac{x}{T} \right) f_{\ell}^{(2N+\ell+3)} (x) dx \right),
\end{multline}
where 
\[
I_\ell:=\int_0^\infty f_{\ell} (x) dx = \frac{1}{2} \Gamma \(\frac{\ell+1}{2} \).
\]
Next we consider convergence of the occurring sums and show that the third and fourth summand in \eqref{innersum} contribute to the error term.
We first note that 
\begin{align}\label{bounderror}
\left| \sum_{\ell\geq 0}\frac{( 2k\pi i z)^{\ell}}{\ell !} \frac{(-1)^{j}}{j!}\frac{B_{2j+\ell+1}\left(\frac{d}{k}\right)}{2j+\ell+1} \right| \le 2 \sum_{\ell\geq 0} \frac{ (2k \pi |z|)^{\ell}}{\ell !} \frac{ (2j+\ell)!}{j! (2\pi)^{2j+\ell+1}},
\end{align} 
where we used Lehmer's bound (see Theorem 1 and equation (19) in \cite{Lehmer})
\begin{equation} \label{Lehmerbound} B_{n}(x) \leq \frac{2 n!}{(2\pi)^n}, \end{equation} 
which holds for all $x \in [0,1]$ and $n>2$. 
By the ratio test, we see that \eqref{bounderror} converges for fixed $j$ if $|z| < \frac{1}{k}$. Thus, the contributions from the second and the third term in \eqref{innersum} converge for $|z| < \frac{1}{k}$.

 We next consider the fourth term.  By Lemma \ref{lemma2} and Lehmer's bound \eqref{Lehmerbound}, we see that
\begin{align*}
&\left| \sum_{\ell\geq 0} \frac{(2k\pi i z)^{\ell}}{\ell !}  \frac{1}{(2N+\ell+3)!} \int_{0}^{\infty} \mathcal{B}_{2N+\ell+3} \left( \frac{d}{k} - \frac{x}{T} \right) f_{\ell}^{(2N+\ell+3)} (x) d x \right| \\
&\le 2 \sum_{\ell\geq 0} \frac{ (2k \pi |z|)^{\ell}}{\ell ! (2\pi)^{2N+\ell+3} } \int_{0}^{\infty} \left| f_{\ell}^{(2N+\ell+3)} (x)  \right| dx \ll  \sum_{\ell\geq 0}
\frac{(4k|z|)^\ell}{\ell!} \Gamma \left( N+\ell + 2 \right),
\end{align*}
which converges for $|z| < \frac{1}{4k}$, again using the ratio test. 

Finally, we note that
\[
\sum_{\ell \ge 0} \frac{(2k\pi i z)^{\ell}}{\ell!} \frac{I_{\ell}}{T^{\ell+1}} \le \sum_{\ell\geq 0} \frac{(2k \pi |z|)^{\ell}}{\ell!}\frac{\Gamma\( \frac{\ell+1}2\)}{2 |T|^{\ell+1}} 
\]
converges for all $z \in \mathbb{C}$ because the ratio of the coefficient 
\[
\frac{2\pi k |z| \Gamma \left( \frac{\ell+2}{2}\right)}{(\ell+1) |T| \Gamma \left( \frac{\ell+1}2\right)} =\frac{2\pi k |z| }{(\ell+1) |T|} \( \left( \frac{\ell +1 }2\right)^{\frac12} + o(1) \)
\]
tends to zero as $\ell$ goes to the infinity. Here we used that for $\alpha \in\R$
\[
\lim_{n\to\infty}\frac{\Gamma(n+\alpha)}{\Gamma(n)n^\alpha}=1.
\]
This completes the proof of Theorem \ref{FasymThm}.
\end{proof}

\section{Wright's Circle Method}

In a series of papers \cite{Wr1, Wr2}, Wright developed a generalized version of the Circle Method to obtain asymptotic formulas for the number of combinatorial functions. In this section, by adopting this method, we prove a general asymptotic formula, which can be applied to all functions of interest for this paper. 

Suppose that a function $\mathcal{F}(q)=\sum_{ n \geq 0} a(n) q^n$ has the following asymptotic expansion
\begin{equation}\label{Fqesti}
\mathcal{F}(q) = e^{\frac{\pi i}{L \tau}} \sum_{j=1}^{N} A(j) \tau^{j} + O \(|\tau|^{N+1} e^{\frac{\pi}{L} \rm{Im}\( -\frac{1}{\tau} \)}\),
\end{equation}
for some $L\in\N, N\in\N$, and $\tau = x+ iy$ with $|x|\leq y \rightarrow 0$. Moreover, we assume that there exists $\varepsilon>0$ such that for $y \le |x| \le \frac{1}{2}$
\begin{equation}\label{Fqminor}
\mathcal{F}(q) \ll  e^{\frac{\pi}{Ly}  - \varepsilon}.
\end{equation}

Under the above two assumption, by employing Wright's Circle Method, we prove the following theorem.

\begin{theorem}\label{genasym}
Suppose that $\mathcal{F}(q) = \sum_{n \ge 0} a(n) q^n$ satisfies the two assumptions \eqref{Fqesti} and \eqref{Fqminor}. Then, as $n \rightarrow \infty$,
\[
a(n) = -2\pi i \sum_{j=1}^{N} A(j)  \(\frac{i}{\sqrt{2Ln}}\)^{j+1} I_{-j-1} \( 2 \pi \sqrt{\frac{2n}{L}} \)  +O \( n^{-\frac{N+2}{2}} e^{2\pi\sqrt{\frac{2n}{L}}} \),
\]
where $I_\ell$ denotes the usual $I$-Bessel function of order $\ell$.
\end{theorem}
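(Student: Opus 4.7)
The plan is to apply the standard Wright Circle Method, choosing the saddle to balance the two competing exponentials. Start with Cauchy's integral formula
$$a(n) = \int_{-1/2}^{1/2} \mathcal{F}\!\left(e^{2\pi i(x+iy)}\right) e^{-2\pi i n(x+iy)} \, dx,$$
with $\tau=x+iy$ on a horizontal line of constant height $y$. Choosing $y=1/\sqrt{2Ln}$ balances the factor $e^{\pi i/(L\tau)}$ coming from \eqref{Fqesti} against $|q^{-n}|=e^{2\pi n y}$, producing the peak size $e^{2\pi\sqrt{2n/L}}$. I would then split the integral into a major arc $|x|\le y$ and a minor arc $y\le|x|\le 1/2$.

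The minor arc is handled directly by the hypothesis \eqref{Fqminor}: its contribution is bounded by $e^{\pi/(Ly)-\varepsilon+2\pi n y}$, which is exponentially smaller than the main term and absorbed into the error. On the major arc, insert the asymptotic expansion \eqref{Fqesti}; the $O(|\tau|^{N+1} e^{(\pi/L)\textnormal{Im}(-1/\tau)})$ remainder contributes $O(y^{N+2}e^{\pi/(Ly)+2\pi n y}) = O(n^{-(N+2)/2}e^{2\pi\sqrt{2n/L}})$, matching the error term claimed in the theorem. What remains is the main-term integral
$$\sum_{j=1}^{N} A(j)\int_{-y}^{y}\tau^{j} e^{\pi i/(L\tau) - 2\pi i n \tau}\, dx.$$

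Next, perform the change of variables $\tau=(i/\sqrt{2Ln})\,w$. A direct computation shows that the phase becomes $(Z/2)(w+1/w)$ with $Z:=2\pi\sqrt{2n/L}$, and the $\tau$-segment is sent to the vertical segment from $1+i$ down to $1-i$. Collecting the Jacobian gives
$$-\left(\frac{i}{\sqrt{2Ln}}\right)^{j+1}\!\int_{1-i}^{1+i} w^{j}\, e^{(Z/2)(w+1/w)}\, dw,$$
and the task is to identify this $w$-integral with $2\pi i\, I_{-j-1}(Z)$ via the Schl\"afli--Hankel representation
$$I_{\nu}(z)=\frac{1}{2\pi i}\int_{\gamma} w^{-\nu-1}\, e^{(z/2)(w+1/w)}\, dw,$$
where $\gamma$ is a Hankel contour looping around the origin from $-\infty$ back to $-\infty$.

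The main technical step, and the part I expect to be hardest, is deforming the short vertical segment through $w=1$ to this Hankel contour $\gamma$ with negligible error. Since $w=1$ is a non-degenerate saddle of $w+1/w$ where the function takes the value $2$, and the vertical line is the direction of steepest descent there, one can join the endpoints $1\pm i$ to $\gamma$ by arcs on which $\textnormal{Re}(w+1/w)\le 2-\eta$ for some fixed $\eta>0$. On such arcs the integrand is bounded by $|w|^{j} e^{(1-\eta/2)Z}$, which is exponentially smaller than $e^{Z}$ and hence absorbed into the error; the polynomial factor $w^{j}$ is harmless along the portion of $\gamma$ going to $-\infty$ because $\textnormal{Re}(w)\to-\infty$ there. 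Collecting all contributions and summing over $j$ produces the stated asymptotic.
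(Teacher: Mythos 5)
Your proposal is correct and follows essentially the same route as the paper: the same choice of radius $y=\frac{1}{\sqrt{2Ln}}$, the same major/minor arc split, insertion of \eqref{Fqesti} on the major arc with the identical error bookkeeping, and the change of variables sending the major arc to the segment from $1-i$ to $1+i$. The only difference is that where the paper invokes Lemma \ref{Psklemma} (quoted from \cite{BD}, generalizing Wright) to convert $P_{s,k}$ into a Bessel function, you sketch the proof of that lemma directly via the Schl\"afli representation and a steepest-descent contour deformation, which is exactly how that cited lemma is established.
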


To determine the main contribution to $a(n)$, we need to evaluate a certain integral, namely for $s, k\in\R^+$, we define
\begin{equation*}
P_{s,k} := \frac{1}{2 \pi i} \int_{1-i}^{1+i} v^s e^{\pi \sqrt{\frac{kn}{6}} \( \frac{1}{v} + v \) } dv.
\end{equation*}
The following lemma, which is an easy generalization of a lemma of Wright \cite{Wr2}, rewrites $P_{s, k}$, up to an error term, as a Bessel function.
\begin{lemma}[Lemma 4.2 of \cite{BD}] \label{Psklemma}
As $n\to\infty$
\[
P_{s,k}  = I_{-s-1} \( \pi \sqrt{\frac{2kn}{3}} \) + O \(e^{\frac{\pi}{2} \sqrt{\frac{3kn}{2}}}\).
\]
\end{lemma}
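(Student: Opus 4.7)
The plan is to recognize $P_{s,k}$ as the dominant piece of a Hankel-type contour integral for $I_{-s-1}$, and to bound the remaining pieces by saddle-point considerations. Set $X := \pi\sqrt{\tfrac{kn}{6}}$, so that the exponential weight $e^{X(v+1/v)}$ in the definition of $P_{s,k}$ matches exactly the weight appearing in Schl\"afli's integral representation
\[
I_{-s-1}(z) \;=\; \frac{1}{2\pi i}\int_{\mathcal{H}} t^{s}\, e^{\frac{z}{2}(t + 1/t)}\,dt,
\]
once we set $z := 2X = \pi\sqrt{\tfrac{2kn}{3}}$. Here $\mathcal{H}$ denotes the standard Hankel contour running from $-\infty$ along the lower side of the branch cut $(-\infty,0]$ of $t^{s}$, counterclockwise around the origin, and returning to $-\infty$ along the upper side.

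First I would deform $\mathcal{H}$, within the simply connected region $\mathbb{C}\setminus(-\infty,0]$ on which $t^{s}e^{X(t+1/t)}$ is holomorphic, into the contour $\mathcal{C}^-\cup\mathcal{C}^0\cup\mathcal{C}^+$, where $\mathcal{C}^0$ is the vertical segment from $1-i$ to $1+i$ and $\mathcal{C}^\pm$ are the horizontal rays at height $\pm i$ running from $1\pm i$ to $-\infty\pm i$. By Cauchy's theorem the total integral is unchanged, and the contribution from $\mathcal{C}^0$ is by definition exactly $P_{s,k}$. Hence it suffices to show that the integrals along $\mathcal{C}^\pm$ are $O(e^{3X/2}) = O\bigl(e^{\frac{\pi}{2}\sqrt{3kn/2}}\bigr)$, which is the stated error term.

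The heart of the proof is this bound. Parametrizing $\mathcal{C}^+$ by $t = 1-r+i$ with $r\geq 0$, a direct calculation gives
\[
\re\!\left(t + \tfrac{1}{t}\right) \;=\; (1-r)\!\left(1 + \frac{1}{(1-r)^2+1}\right) \;=:\; f(1-r).
\]
An elementary derivative computation shows $f$ is strictly increasing on $\mathbb{R}$, so its maximum on the ray is attained at $r=0$, where $f(1)=3/2$. Moreover, for $r\geq 1$ one has $f(1-r)\leq 1-r$, so the integrand decays at least like $e^{-Xr}$ as $r\to\infty$, which easily absorbs the polynomial factor $|t|^{\re s}$. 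Splitting the $r$-integral at $r=1$, the near-saddle contribution is dominated by $e^{3X/2}\!\int_0^{1}\!(r+2)^{\re s}e^{-Xr}dr \ll e^{3X/2}/X$, and the tail $r\geq 1$ gives an exponentially smaller term; the analysis on $\mathcal{C}^-$ is identical by complex conjugation symmetry. Combining yields the stated bound.

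The main (and only real) obstacle is the uniform estimate $\re(t+1/t)\leq 3/2$ along the full horizontal rays — one has to check that the saddle value at $v=1$ is never exceeded on the deformed contour. Everything else is a standard Cauchy-deformation step together with routine tail estimates for the exponential integral.
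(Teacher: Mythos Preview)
The paper does not supply a proof of this lemma; it is quoted verbatim as Lemma~4.2 of \cite{BD}, itself a mild generalization of Wright's argument in \cite{Wr2}. Your argument is correct and is essentially the same route taken there: one recognizes $P_{s,k}$ as the segment $[1-i,1+i]$ of a Schl\"afli--Hankel contour for $I_{-s-1}(2X)$ with $X=\pi\sqrt{kn/6}$, and bounds the two horizontal tails at height $\pm 1$ using $\max \mathrm{Re}(v+1/v)=3/2$ along them, which yields exactly the stated $O\!\left(e^{3X/2}\right)=O\!\left(e^{\frac{\pi}{2}\sqrt{3kn/2}}\right)$ error. Your verification that $f(u)=u\bigl(1+\tfrac{1}{u^2+1}\bigr)$ is increasing and satisfies $f(u)\le u+\tfrac12$ (equivalently $(u-1)^2\ge 0$) is the key calculus check, and it is done correctly.
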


We are now ready to prove Theorem \ref{genasym}.
\begin{proof}[Proof of Theorem \ref{genasym}]
By Cauchy's integral formula, we see that 
\begin{align*}
	a(n) &= \frac{1}{2 \pi i} \int_{\mathcal{C}} \frac{\mathcal{F}(q)}{q^{n+1}} \, dq =\int_{-\frac12}^{\frac12} \mathcal{F} \left(e^{2\pi i x- \frac{\sqrt{2}\pi}{\sqrt{Ln}}} \right)e^{\pi \sqrt{\frac{2n}{L}}-2\pi i n x} \, dx \\
	&=\int_{|x| \le \frac{1}{\sqrt{2Ln}}} \mathcal{F} \left( e^{2\pi i x- \frac{\sqrt{2}\pi}{\sqrt{Ln}}} \right)e^{\pi \sqrt{\frac{2n}{L}}-2\pi i n x} \, dx + \int_{\frac{1}{\sqrt{2Ln}} \le |x| \le \frac{1}{2} } \mathcal{F} \left( e^{2\pi i x - \frac{\sqrt{2}\pi}{\sqrt{Ln}}} \right)e^{\pi \sqrt{\frac{2n}{L}}-2\pi i n x} \, dx \\
	&=:I' + I'',
\end{align*}
where $\mathcal{C}:=\{|q|=e^{-\frac{\sqrt{2}\pi}{\sqrt{Ln}}}\}$. The integral $I'$ is the main contribution and the integral $I''$ contributes the error term as shown in the following.

We first approximate $I'$. Note that since $|x| \le y$ and ${\rm Im} (-\frac{1}{\tau}) = \frac{y}{x^2 + y^2} \le \frac{1}{y}$, the Big-O term in \eqref{Fqesti} becomes $O ( y^{N+1} e^{\frac{\pi}{Ly} })$.
 Next we evaluate, with $\tau=x+ i \frac{1}{\sqrt{2Ln}}$,
\begin{equation}\label{ieval}
\int_{|x| \le \frac{1}{\sqrt{2Ln}}} \tau^s  e^{\frac{\pi i}{L\tau}-2\pi i n \tau} \, dx =\(\frac{i}{\sqrt{2Ln}}\)^{s+1} (-2\pi i) P_{s,\frac{12}{L}}.
\end{equation}
By \eqref{Fqesti}, \eqref{ieval}, and Lemma \ref{Psklemma},  we then find that
\begin{align*}
I' = -2\pi i\sum_{j=1}^{N} A(j) \(\frac{i}{\sqrt{2Ln}}\)^{j+1} I_{-j-1} \( 2\pi \sqrt{\frac{2n}{L}} \)  +O \( n^{-\frac{N+2}{2}} e^{2\pi\sqrt{\frac{2n}{L}}} \).
\end{align*}

Moreover, by assumption \eqref{Fqminor}, it is immediate that 
\[
| I'' | \ll  n^{-\frac{N+2}{2}} e^{2\pi\sqrt{\frac{2n}{L}}},
\]
yielding the statement of the theorem. 
\end{proof}

\section{Asymptotics for $u(m,n)$}
In light of Theorem \ref{genasym}, to obtain an asymptotic formula for $u(m,n)$, it suffices to investigate  the asymptotic behavior of the generating function
\[
U_m(q):=\sum_{n\geq 0} u(m, n) q^n
\]
near and away from the dominant pole. These asymptotic behaviors are given in the following two lemmas whose proof is given at the end of this section. We start with  $q=1$. To state it, we define the constants $\alpha_{m,2k+1}$ and $\gamma_{2\ell,j}(\kappa)$ by
\begin{equation}\label{alphamk}
\zeta^{-\frac12} \left( 1-\zeta\right) \cos (2\pi m z) =: \sum_{k\geq 0} i \alpha_{m, 2k+1} z^{2k+1},
\end{equation}
 \begin{equation}\label{gamma2lj}
\gamma_{2\ell, j} (\kappa) := 
(2\kappa)^j \frac{(2\kappa \pi )^{2\ell} (-1)^{\ell}\pi^j B_{2j+2\ell+1}\left(\frac1\kappa\right)}{(2\ell ) !j!(2j+2\ell+1)} .
\end{equation} 
\begin{lemma} \label{umqmain}
For $|x| \le y$ and a positive integer $N \geq 2$, as $y \rightarrow 0$, we have
\begin{equation*}
\begin{aligned}
U_m (q) &=  e^{\frac{\pi i}{6\tau}} \sum_{\substack{k,r,s,\ell,j\geq 0 \\ 2k+r+s+2\ell+j+2 \le N}} \alpha_{m, 2k+1} \frac{(\pi i)^{r+s} (-1)^s }{12^s r! s!}
 \gamma_{2\ell,j}(4) \(\frac{i}{2} \)^j \\
 &\qquad\qquad\qquad\times E_{2k+2r+2\ell+1} (0) \tau^{2k+r+s+2\ell+j+2} + O\(  |\tau|^{N+1} e^{\frac{\pi}{6}{\rm Im}\(-\frac{1}{\tau} \)} \).
\end{aligned}
\end{equation*}
\end{lemma}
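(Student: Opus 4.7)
I would first use orthogonality and the symmetry $u(m,n)=u(-m,n)$ to write
\[
U_m(q) = \int_{-1/2}^{1/2} U(e^{2\pi i z};q)\cos(2\pi m z)\,dz.
\]
Via \eqref{entry632}, one then decomposes $U(\zeta;q) = U_1(\zeta;q) + U_2(\zeta;q)$, where $U_1(\zeta;q) = N(\zeta;q)/((\zeta q)_\infty(\zeta^{-1}q)_\infty)$ with $N(\zeta;q) := \sum_{n\geq 0}(-1)^n\zeta^{2n+1}q^{n(n+1)/2}$, and $U_2$ is the pure partial theta tail from \eqref{entry632}. Since $U_2$ has no exponential singularity at $q=1$ (a direct estimate bounds it by $O(|\tau|^{-1/2})$), its $z$-integral contribution is dominated by the main term of size $|e^{\pi i/(6\tau)}|$ and is absorbed into the stated error.

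\textbf{Partial theta rewriting.} Splitting the sum in $N(\zeta;q)$ by the parity of $n$ and completing the square in $n(n+1)/2$ yields
\[
N(\zeta;q) = q^{-1/8}\bigl[F_{1,4}(z;\tau/8)-F_{3,4}(z;\tau/8)\bigr],
\]
so Theorem \ref{FasymThm} applies with $k=4$ and $d\in\{1,3\}$. The half-integer $\tau$-powers $(-i\tau)^{-(\ell+1)/2}$ are $d$-independent and cancel in this difference; the remaining Bernoulli terms combine via $B_n(1-x)=(-1)^n B_n(x)$, which kills odd $\ell$ and doubles even $\ell$. Careful bookkeeping of constants should then give
\[
F_{1,4}(z;\tau/8)-F_{3,4}(z;\tau/8) = -2\sum_{\ell,j\geq 0}\gamma_{2\ell,j}(4)\,(i/2)^j\,z^{2\ell}\tau^j + O(|\tau|^{N+1}).
\]

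\textbf{Substitution and integration.} Next I would apply Lemma \ref{thetaas} to $1/((\zeta q)_\infty(\zeta^{-1}q)_\infty)$; the $q$-prefactors combine as $q^{1/12}\cdot q^{-1/8}=q^{-1/24}$. Expanding
\[
q^{-1/24}=\sum_{s\geq 0}\frac{(-1)^s(\pi i)^s}{12^s s!}\tau^s,\qquad e^{\pi i z^2/\tau}=\sum_{r\geq 0}\frac{(\pi i)^r z^{2r}}{r!\,\tau^r},
\]
and using \eqref{alphamk} for $\zeta^{-1/2}(1-\zeta)\cos(2\pi mz)$, together with the identity $(1-e^{2\pi i z/\tau})e^{-\pi i z/\tau}=-2\sinh(\pi i z/\tau)$, collapses the $z$-dependence outside the hyperbolic factor to $z^{2k+2r+2\ell+1}$ with an even integrand. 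Lemma \ref{integralas}(i) then gives
\[
\int_{-1/2}^{1/2}\frac{z^{2k+2r+2\ell+1}}{\sinh(\pi i z/\tau)}\,dz = 2\,\mathcal{I}_{k+r+\ell}(1/2,\tau) = E_{2k+2r+2\ell+1}(0)\,\tau^{2k+2r+2\ell+2}
\]
modulo an exponentially small error. The four sign factors ($-i$ from Lemma \ref{thetaas}, $-1/2$ from $1/(-2\sinh)$, $-2$ from the $F$-expansion, $i$ from \eqref{alphamk}) multiply to $+1$, and the $\tau$-exponents add to $s+j-r+2(k+r+\ell)+2=2k+r+s+2\ell+j+2$, matching the target formula.

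\textbf{Main obstacle.} The chief difficulty is that the expansion in Theorem \ref{FasymThm} converges only for $|z|<1/(4k)=1/16$, smaller than the integration range $[-1/2,1/2]$. I would resolve this by splitting at some $0<\epsilon<1/16$: on $[-\epsilon,\epsilon]$ the series can be substituted termwise, while on $\{\epsilon\leq|z|\leq 1/2\}$ the unexpanded integrand has magnitude at most $e^{\pi(1/6+z^2-z)\mathrm{Im}(-1/\tau)}$, exponentially smaller than the main growth rate $e^{\pi\mathrm{Im}(-1/\tau)/6}$ by a factor $e^{-\pi(\epsilon-\epsilon^2)\mathrm{Im}(-1/\tau)}$, hence absorbed into the error. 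The remaining errors (Lemma \ref{thetaas}'s factor $1+O(e^{-2\pi(1-z)\mathrm{Im}(-1/\tau)})$, the Bernoulli truncation in Theorem \ref{FasymThm}, and the cutoff tail in Lemma \ref{integralas}) are all either exponentially small or bounded by $|\tau|^{N+1}|e^{\pi i/(6\tau)}|$.
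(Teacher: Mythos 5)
Your proposal follows essentially the same route as the paper's proof: the same decomposition of $U(\zeta;q)$ via \eqref{entry632}, the same rewriting of the partial theta function as $q^{-1/8}(F_{1,4}(z;\tau/8)-F_{3,4}(z;\tau/8))$, the same use of Theorem \ref{FasymThm}, Lemma \ref{thetaas}, and Lemma \ref{integralas}(i), and the same splitting of the $z$-integral at a point inside the radius of convergence $1/16$ with an exponential-saving bound (via $z-z^2$) on the remaining range. The constant bookkeeping and exponent arithmetic you give all check out against the paper's computation.
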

The next lemma gives the behavior of $U_m(q)$ away from $q=1$.
\begin{lemma}\label{umqerror}
For $y \le |x| \le \frac{1}{2}$ and some $\varepsilon>0$, we have
\begin{equation*}
U_{m} (q) \ll e^{\frac{\pi}{6y} - \varepsilon }.
\end{equation*}
\end{lemma}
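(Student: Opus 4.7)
The plan is to split $U(\zeta;q):=\sum_{n\ge 0}q^n/((\zeta q)_n(\zeta^{-1}q)_n)$ via \eqref{entry632} into a modular summand $\mathcal{M}(\zeta;q):=\Theta(\zeta;q)/((\zeta q)_\infty(\zeta^{-1}q)_\infty)$, with $\Theta(\zeta;q):=\sum_{n\ge 0}(-1)^n\zeta^{2n+1}q^{n(n+1)/2}$, and a partial theta summand $\mathcal{P}(\zeta;q):=(1-\zeta)\sum_{n\ge 0}(-1)^n\zeta^{3n}q^{n(3n+1)/2}(1-\zeta^2 q^{2n+1})$. Setting $\zeta=e^{2\pi i z}$ and using Cauchy's integral formula, $U_m(q)=\int_0^1 e^{-2\pi i mz}(\mathcal{M}+\mathcal{P})\,dz$, so it suffices to estimate each summand. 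Since $|\mathcal{P}(\zeta;q)|\le 4\sum_{n\ge 0}|q|^{n(3n+1)/2}=O(1)$ uniformly for $|\zeta|=1$, $|q|<1$, its contribution to $U_m(q)$ is $O(1)\ll e^{\pi/(6y)-\varepsilon}$.

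For the modular piece, apply Jacobi's triple product to obtain
\begin{equation*}
\mathcal{M}(\zeta;q)=\frac{-iq^{1/8}\zeta^{-1/2}(1-\zeta)\Theta(\zeta;q)(q)_\infty}{\vartheta(z;\tau)},
\end{equation*}
where $|\Theta(\zeta;q)|$ is uniformly bounded. Applying the modular transformations of Lemma \ref{transfolemma} to both $(q)_\infty$ and $\vartheta(z;\tau)$ rewrites the ratio in terms of $\tilde\tau:=-1/\tau$ and $\tilde z:=z/\tau$. The crucial geometric fact is that on the minor arc $y\le |x|\le\tfrac12$ one has $\im(-1/\tau)=y/|\tau|^2\le 1/(2y)$, strictly less than the corresponding value $\ge 1/(2y)$ on the major arc $|x|\le y$ that saturates Lemma \ref{umqmain}. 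Combining this with Lemma \ref{Pesti}, which gives $|(q)_\infty|^{-1}\ll e^{5\pi/(72y)}$, provides the required exponential saving. At the special points $z\in\{0,1\}$ the factor $(1-\zeta)$ cancels the simple zero of $\vartheta(z;\tau)$, and $\mathcal{M}(1;q)=\Theta(1;q)/(q)_\infty^2$ is bounded in absolute value by $\ll |(q)_\infty|^{-2}\ll e^{5\pi/(36y)}=e^{\pi/(6y)-\pi/(36y)}$, already exhibiting the saving at those points.

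The main obstacle is to make the estimate uniform across $z\in[0,1]$: a naive pointwise bound on $|\mathcal{M}(\zeta;q)|$ based on the product bound $|(1-\zeta q^k)(1-\zeta^{-1}q^k)|\ge(1-|q|^k)^2$ yields only $|\mathcal{M}|\ll(|q|;|q|)_\infty^{-2}\sim y\,e^{\pi/(6y)}$, which lacks the required $\varepsilon$ saving. To remedy this, one exploits either (a) a refined lower bound on $|(\zeta q)_\infty(\zeta^{-1}q)_\infty|$ valid for $|x|\ge y$ (an analog of Pesti's argument, using that the phases of $q^n$ are well-distributed in this regime), giving pointwise control, or (b) the oscillation against $e^{-2\pi imz}$ inside the integral, exploited through a contour shift or integration by parts, so that the $z$-integral is smaller than its pointwise maximum. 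Either route, assembled with Lemma \ref{Pesti} and the bound $\im(-1/\tau)\le 1/(2y)$, yields $|U_m(q)|\ll e^{\pi/(6y)-\varepsilon}$ for a suitable absolute $\varepsilon>0$.
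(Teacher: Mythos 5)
Your reduction of the problem is sound up to a point: splitting off the partial theta summand (though note $\sum_{n\geq 0}|q|^{n(3n+1)/2}\asymp y^{-1/2}$, not $O(1)$ --- harmless, since it is still polynomial in $1/y$) and recognizing that the naive bound $|(\zeta q)_\infty(\zeta^{-1}q)_\infty|\geq (|q|;|q|)_\infty^2$ gives no exponential saving correctly isolates the real difficulty. But the proof stops exactly there: neither remedy (a) nor (b) is carried out, and neither is routine. A ``refined lower bound on $|(\zeta q)_\infty(\zeta^{-1}q)_\infty|$ uniform in $z\in[0,1]$'' is essentially the whole problem restated --- the product genuinely degenerates when some $\zeta q^k$ approaches $1$, which happens for suitable $z$ at every $x$ --- and the oscillation route (b) offers no visible mechanism for an \emph{exponential} gain from integrating against $e^{-2\pi i m z}$; integration by parts only trades the integrand for its (large) $z$-derivatives. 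So as written this is a proof sketch with the decisive step missing.

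The paper closes the gap by a different device that you do not use: the partial-fraction (crank) identity \eqref{crankpartial},
\begin{equation*}
\frac1{(\zeta q)_\infty\left(\zeta^{-1} q\right)_\infty} = \frac{1-\zeta}{(q)_{\infty}^2}\sum_{n \in \mathbb{Z}} \frac{(-1)^n q^{\frac{n(n+1)}{2}}}{1-\zeta q^n}.
\end{equation*}
This replaces the two-variable infinite product by $1/(q)_\infty^2$ times a Lerch-type sum which is bounded by $O(y^{-3/2})$ \emph{uniformly in $\zeta$ on the unit circle}, because after multiplying by $(1-\zeta)$ each remaining denominator satisfies $|1-\zeta q^n|\geq 1-|q|^{|n|}$ and the Gaussian decay of $q^{n(n+1)/2}$ does the rest. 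The exponential saving then comes entirely from Lemma \ref{Pesti}: on the minor arc $|1/(q)_\infty|\ll e^{5\pi/(72y)}$, so $|1/(q)_\infty^2|\ll e^{5\pi/(36y)}$, and $\frac{5\pi}{36}<\frac{\pi}{6}$ gives the required $\varepsilon$. No lower bound on the Jacobi theta quotient and no exploitation of oscillation in $z$ is needed. If you want to salvage your argument, this identity (from \cite{BCCL}) is the missing ingredient.
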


From the above two lemmas, the asymptotic formula for $u(m,n)$ is immediate.

\begin{theorem}\label{umnasymthm}
For  $m \in \N_{0}$ and an integer $N \geq 2$, we have, as $n \rightarrow \infty$,
\begin{align*}
u(m,n) &=\sum_{\substack{k,r,s,\ell,j\geq 0 \\ 2k+r+s+2\ell+j+2 \le N}}  2^{1-j}  (-1)^{k+r+s+j+\ell+1} \alpha_{m, 2k+1} \frac{\pi^{r+s+1}}{12^s r! s!}  E_{2k+2r+2\ell+1} (0) \gamma_{2\ell, j} (4) \\
&\qquad\qquad\qquad\qquad\times X_{2k+r+s+2\ell+j+3} (n)
+ O \( n^{-\frac{N+2}2} e^{2\pi \sqrt{\frac{n}{3}}} \),
\end{align*}
where $X_{k} (n) :=  (2\sqrt{3n})^{-k}I_{-k} (2\pi \sqrt{n/3})$.
\end{theorem}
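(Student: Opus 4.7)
The plan is to derive Theorem \ref{umnasymthm} by verifying that the generating function $U_m(q)$ satisfies the two hypotheses of Theorem \ref{genasym} and then applying it directly. The two hypotheses are exactly the content of the preceding Lemmas \ref{umqmain} and \ref{umqerror}: Lemma \ref{umqmain} supplies the asymptotic expansion near the dominant cusp $q=1$ (the major arc) in the form \eqref{Fqesti} with $L=6$, while Lemma \ref{umqerror} provides the minor-arc bound \eqref{Fqminor}, again with $L=6$ and some explicit $\varepsilon>0$. (The proofs of these two lemmas are deferred to the end of the section, but they are assumed here.)

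First I would rewrite the expansion in Lemma \ref{umqmain} in the shape $e^{\pi i/(6\tau)}\sum_{j'=2}^N A(j')\tau^{j'} + O(|\tau|^{N+1}e^{\frac{\pi}{6}\mathrm{Im}(-1/\tau)})$, where
\[
A(j') := \sum_{\substack{k,r,s,\ell,j\ge 0 \\ 2k+r+s+2\ell+j+2 = j'}} \alpha_{m,2k+1}\,\frac{(\pi i)^{r+s}(-1)^s}{12^s\,r!\,s!}\,\gamma_{2\ell,j}(4)\,\Bigl(\frac{i}{2}\Bigr)^{j} E_{2k+2r+2\ell+1}(0).
\]
Since there is no $\tau^0$ or $\tau^1$ term in Lemma \ref{umqmain}, the sum in \eqref{Fqesti} effectively starts at $j'=2$; this only strengthens the conclusion of Theorem \ref{genasym}. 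With $L=6$, one has $\sqrt{2Ln}=2\sqrt{3n}$ and $\sqrt{2n/L}=\sqrt{n/3}$, so Theorem \ref{genasym} gives
\[
u(m,n) \;=\; -2\pi i\sum_{j'=2}^{N} A(j')\,\Bigl(\tfrac{i}{2\sqrt{3n}}\Bigr)^{j'+1} I_{-j'-1}\!\bigl(2\pi\sqrt{n/3}\bigr) \;+\; O\!\left(n^{-\frac{N+2}{2}} e^{2\pi\sqrt{n/3}}\right).
\]

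Next I would unravel the definition of $A(j')$, interchanging the inner sum over $(k,r,s,\ell,j)$ with the outer sum over $j'$, and absorb $(2\sqrt{3n})^{-(j'+1)} I_{-(j'+1)}(2\pi\sqrt{n/3}) = X_{j'+1}(n)$ using the notation of the theorem, noting $j'+1 = 2k+r+s+2\ell+j+3$. The only remaining task is bookkeeping for the powers of $i$ produced by $-2\pi i\cdot i^{j'+1}\cdot (\pi i)^{r+s}\cdot (i/2)^{j}$: the exponent of $i$ is $1+(2k+r+s+2\ell+j+3)+(r+s)+j = 2(k+r+s+\ell+j)+4$, which collapses to $(-1)^{k+r+s+\ell+j}$. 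Combining this with the sign $(-1)^s$ already present in $A(j')$ and with the overall $-1$, one obtains the explicit sign $(-1)^{k+r+s+\ell+j+1}$ appearing in the statement, together with the power of $2$ coming from $(i/2)^j$ and the initial $-2\pi$ combining into the prefactor $2^{1-j}\pi^{r+s+1}/(12^s r!s!)$. This matches the formula in the theorem.

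There is essentially no analytic obstacle here: the two lemmas do all the work, and Theorem \ref{genasym} converts the asymptotic expansion of $U_m(q)$ into an asymptotic expansion of its coefficients. The only subtle step is the arithmetic of powers of $i$ in the preceding paragraph, which has to be carried out carefully so that the final expression is real (as it must be, since $u(m,n)\in\Z$ and the coefficients $\alpha_{m,2k+1}$ defined by \eqref{alphamk} are real). Once that accounting is done, Theorem \ref{umnasymthm} follows immediately.
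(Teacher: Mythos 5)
Your proposal is exactly the paper's proof: the paper likewise verifies hypotheses \eqref{Fqesti} and \eqref{Fqminor} via Lemmas \ref{umqmain} and \ref{umqerror} and then applies Theorem \ref{genasym} with $L=6$ and the same $A(M)$. One small point in your final bookkeeping: from $i^{2(k+r+s+\ell+j)+4}=(-1)^{k+r+s+\ell+j}$, multiplying by the $(-1)^s$ already present in $A(j')$ and by the overall $-1$ yields $(-1)^{k+r+\ell+j+1}$ (the two occurrences of $s$ cancel, since $(-1)^{2s}=1$), not $(-1)^{k+r+s+\ell+j+1}$; so either that last line of yours or the exponent displayed in the theorem is off by a factor $(-1)^s$, and it is worth rechecking which.
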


\begin{proof}[Proof of Theorem \ref{umnasymthm}]
 Using Lemmas \ref{umqmain} and \ref{umqerror}, we find that $U_m (q)$ satisfies the two assumptions \eqref{Fqesti} and \eqref{Fqminor} required for Theorem \ref{genasym}. By applying Theorem \ref{genasym} with $L=6$ and
\[
A(M):=\sum_{\substack{k,r,s,\ell,j \geq 0 \\ 2k+r+s+2\ell+j+2 = M}} \alpha_{m, 2k+1} \frac{(\pi i)^{r+s} (-1)^s }{12^s r! s!} \gamma_{2\ell,j}(4) \(\frac{i}{2} \)^j  E_{2k+2r+2\ell+1} (0),
\]
we deduce the asymptotic formula for $u(m,n)$ as claimed in Theorem \ref{umnasymthm}.
\end{proof}

In particular, choosing $N=4$ in Theorem \ref{umnasymthm}, yields by a direct calculation
\begin{corollary} \label{umnasym}
For  $m\in \N_0$, we have,  as $n \rightarrow \infty$,
\begin{align*}
u(m,n) &= \frac{\pi^2}{2} X_{3} (n) + \frac{\pi^3}{3} X_{4} (n) +\frac{\pi^4}{72} \( 59-36 m^2 \) X_5 (n) + O \( n^{-3} e^{2\pi \sqrt{\frac{n}{3}}} \).
\end{align*}
\end{corollary}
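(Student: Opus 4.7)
The strategy is to set $N = 4$ in Theorem \ref{umnasymthm} and compute the resulting finite sum explicitly. The Bessel index of each summand is $2k+r+s+2\ell+j+3$, so the contributing tuples $(k,r,s,\ell,j) \in \N_0^5$ are exactly those with $2k+r+s+2\ell+j \le 2$. These split as one tuple feeding $X_3(n)$, three tuples (with $r+s+j=1$) feeding $X_4(n)$, and eight tuples (with $2k+r+s+2\ell+j=2$) feeding $X_5(n)$. The stated error term $O(n^{-3} e^{2\pi\sqrt{n/3}})$ is precisely the one supplied by Theorem \ref{umnasymthm} with $L = 6$ and $N=4$, since $n^{-(N+2)/2}=n^{-3}$ and $2\pi\sqrt{2n/L}=2\pi\sqrt{n/3}$.

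To evaluate the coefficients one first assembles the required constant data. For $\alpha_{m,2k+1}$ I would use $\zeta^{-1/2}(1-\zeta) = -2i\sin(\pi z)$ together with the product-to-sum formula $2\sin(\pi z)\cos(2\pi m z) = \sin((2m+1)\pi z) - \sin((2m-1)\pi z)$ in \eqref{alphamk}, obtaining $\alpha_{m,1} = -2\pi$ and $\alpha_{m,3} = \pi^3(12m^2+1)/3$. The values $B_1(1/4) = -1/4$, $B_3(1/4) = 3/64$, $B_5(1/4) = -25/1024$ substituted into \eqref{gamma2lj} yield $\gamma_{0,0}(4) = -1/4$, $\gamma_{0,1}(4) = \pi/8$, $\gamma_{2,0}(4) = -\pi^2/2$, and $\gamma_{0,2}(4) = -5\pi^2/32$. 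Finally, the Euler polynomial values $E_1(0) = -1/2$, $E_3(0) = 1/4$, and $E_5(0) = -1/2$ are standard.

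Substituting into Theorem \ref{umnasymthm} is then mechanical. The tuple $(0,0,0,0,0)$ produces the coefficient $\pi^2/2$ of $X_3(n)$, and the three $r+s+j=1$ tuples sum to $\pi^3/4 - \pi^3/24 + \pi^3/8 = \pi^3/3$. For $X_5(n)$, the key observation is that the $m$-dependence sits entirely in the unique tuple $(1,0,0,0,0)$ (only $\alpha_{m,2k+1}$ depends on $m$, and $2k \le 2$ forces $k \in \{0,1\}$), which contributes $-\pi^4 m^2/2$; the remaining seven tuples combine to the $m$-independent constant $59\pi^4/72$, producing the claimed $\frac{\pi^4}{72}(59 - 36 m^2)$.

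The principal obstacle is purely bookkeeping: correctly tracking the $2^{1-j}$, $(-1)^{k+r+s+j+\ell+1}$, and $12^s r!s!$ prefactors across the eight $X_5$-tuples is where sign or factorial errors are most likely to creep in, so I would do a sanity check by verifying that the eight contributions $-\pi^4/24$, $\pi^4/4$, $-\pi^4/48$, $\pi^4/16$, $\pi^4/576$, $-\pi^4/96$, $5\pi^4/64$, $\pi^4/2$ add to $59\pi^4/72$ (after common denominator $576$ the numerators $-24, 144, -12, 36, 1, -6, 45, 288$ sum to $472 = 8 \cdot 59$). No analytic input beyond Theorem \ref{umnasymthm} is required.
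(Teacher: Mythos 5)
Your proposal is correct and is exactly the paper's route: the paper derives Corollary \ref{umnasym} by setting $N=4$ in Theorem \ref{umnasymthm} and performing the same direct coefficient computation (which the paper leaves implicit). Your constants $\alpha_{m,1}=-2\pi$, $\alpha_{m,3}=\pi^3(12m^2+1)/3$, the values of $\gamma_{2\ell,j}(4)$ and $E_{2k+2r+2\ell+1}(0)$, and the eight $X_5$-contributions summing to $472/576=59/72$ all check out.
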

Corollary \ref{umnasym} now immediately gives the inequalities for $u(m, n)$.
\begin{proof}[Proof of \eqref{1.3}]
Corollay \ref{umnasym} yields that
\[
u(j,n) - u(m,n) \sim \frac{\pi^4}{2}  \frac{m^2 - j^2}{\left(2\sqrt{3n}\right)^5}  I_{-5} \( 2 \pi \sqrt{\frac{n}{3}} \),
\]
which directly implies the claim since $I_\ell(x)>0$ for $x\in\R^+$.
\end{proof}

Now we turn to proving Lemma \ref{umqmain}.
\begin{proof}[Proof of Lemma \ref{umqmain}]
We start with noting that Cauchy's integral formula and the symmetry $u(-m,n) =u(m,n)$ imply,
\begin{equation}\label{symmint}
U_m(q)=2\int_0^{\frac12} U(\zeta; q)\cos(2\pi mz)dz.
\end{equation}
 Using \eqref{entry632}, we decompose the generating function as
\begin{align*}
U(\zeta; q) =G_{u,1} (\zeta ;q) + G_{u,2} (\zeta ; q),
\end{align*}
where
\begin{align*}
G_{u, 1}(\zeta; q)&:=\frac{\sum_{n\geq 0}(-1)^n \zeta^{2n+1} q^{\frac{n(n+1)}{2}}}{(\zeta q)_\infty \left(\zeta^{-1}q\right)_\infty},\\
G_{u, 2}(\zeta; q)&:=(1-\zeta) \sum_{n\geq 0} (-1)^n \zeta^{3n} q^{\frac{n(3n+1)}{2}}\left(1-\zeta^2 q^{2n+1}\right).
\end{align*}
 We first approximate the partial theta function occuring in $G_{u, 1}$. By splitting into even and odds, we obtain
\begin{equation}\label{partialtheta}
\sum_{n\geq 0} (-1)^n \zeta^{2n+1} q^{\frac{n(n+1)}{2}} 
=q^{-\frac18}\left(F_{1,4} \left(z; \frac{\tau}{8}\right)-F_{3,4} \left(z; \frac{\tau}{8}\right)\right).
\end{equation}
By Theorem \ref{FasymThm}, we find that for $|z| < 1/16$,  \eqref{partialtheta}  has the asymptotic expansion
\[
q^{-\frac18} \sum_{\ell\geq 0} \frac{(8\pi iz)^\ell}{\ell!}\sum_{j=0}^N\frac{(4\pi i)^j}{j!}
\frac{\left(B_{2j+\ell+1}\left(\frac34\right)-B_{2j+\ell+1}\left(\frac14\right)\right)}{2j+\ell+1} \tau^{j} +O\left( |\tau|^{N+1}\right).
\]
Thus, by employing Lemma \ref{thetaas},  we have for $z\in(0, 1/16)$ the asymptotic expansion 
\begin{equation}
\label{asG1}
\begin{aligned}
G_{u,1} \left( \zeta; q \right) &= 
\frac{2iq^{-\frac{1}{24}} e^{\frac{\pi i}{6\tau}} \zeta^{-\frac12} \left( 1-\zeta\right) e^{\frac{\pi i z^2}{\tau}}}{\left( 1-e^{\frac{2\pi i z}{\tau}}\right)e^{-\frac{\pi i z}{\tau}}}  
\sum_{\ell\geq 0} \frac{(8\pi i z)^{2\ell}}{(2\ell)!} \sum_{j=0}^N \frac{(4\pi i)^{j}}{j!} \frac{ B_{2j+2\ell+1}\left(\frac14\right)}{2j+2\ell+1} \tau^j \\&\quad
+O \left( |\tau|^{N+1} e^{ { \frac{83 \pi }{768}} {\rm Im} \(-\frac{1}{\tau}\)}\right) ,
\end{aligned}
\end{equation}
where we used that $B_{k} (x)=(-1)^kB_k(1-x)$ and that $z-z^2 \le 15/256$ for $z \in (0,1/16)$.

 Moreover, for $1/16 \le z \le 1/2$, we can bound 
\begin{equation}\label{g1error}
 G_{u,1} (\zeta ; q)  \ll  e^{{ \frac{83 \pi }{768}}\text{Im}\left(-\frac1\tau\right)}  \sum_{n\geq 0}  e^{-\frac{\pi n(n+1)y}2}  \ll |\tau|^{-\frac12} e^{{ \frac{83 \pi }{768}}\rm{Im}\(-\frac1{\tau}\)},
\end{equation}
where we used that $y\gg|\tau|$ and Lemma \ref{thetaas} to estimate the contribution from the infinite product. 

For $G_{u,2}$ we bound directly for $ 0 \leq z \leq 1/2$ 
\begin{equation}\label{g2bound}
\left\lvert (1-\zeta)\sum_{n\geq 0} (-1)^n \zeta^{3n} q^{\frac{n(3n+1)}{2}}\left(1-\zeta^2 q^{2n+1}\right)\right\rvert
\ll\sum_{n\geq 0}  e^{-2\pi  n^2 y}  \ll |\tau|^{-\frac12}.
\end{equation}

Therefore, decomposing the integral in \eqref{symmint} as 
\begin{align*}
U_m(q) &= 2 \int_0^{\frac{1}{16} }  G_{u,1} (\zeta; q) \cos(2\pi mz)dz \\ 
& \qquad + 2\int_{\frac{1}{16}}^{\frac12} G_{u,1} (\zeta; q) \cos(2\pi mz)dz + 2\int_0^{\frac{1}{2}} G_{u,2} (\zeta; q) \cos(2\pi mz)dz \\
&=: M_{u} (q) +E_{u,1}(q) + E_{u,2} (q), 
\end{align*}
we observe, by \eqref{g1error} and \eqref{g2bound}, that
\begin{equation}
\label{errorbound}
E_{u,1} (q) + E_{u,2}  (q) \ll |\tau|^{-\frac12} e^{\frac{83\pi}{768} \rm{Im} \( -\frac1{\tau} \) } .
\end{equation}
On the other hand, by \eqref{asG1}, Lemma \ref{integralas} (i), and by expanding $e^{\frac{\pi iz^2}{\tau}}$,  we deduce that $M_u(q)$ equals 
\begin{equation}\label{expandmain}
\begin{aligned}
 &i q^{-\frac{1}{24}} e^{\frac{\pi i}{6\tau}}  \sum_{\substack{k,r,\ell,j\geq 0 \\ 2k+r+2\ell+j+2 \le N}} i \alpha_{m, 2k+1} \frac{( \pi i )^r}{r!} \left(-2 i^j \right) 2^{-j} \gamma_{2\ell,j} (4) \tau^{j-r} \int_0^{\frac{1}{16}} \frac{z^{2k+1+2r+2\ell}}{\sinh \left( \frac{\pi  i z}{\tau}\right)}dz \\&\qquad\qquad\qquad\qquad\qquad\qquad\qquad\qquad\qquad\qquad\qquad\qquad\qquad\qquad\qquad + O\( |\tau|^{N+1} e^{\frac{\pi}{6} {\rm Im}\(- \frac{1}{\tau} \) } \)  \\
&=q^{-\frac{1}{24}} e^{\frac{\pi i}{6 \tau}} \sum_{\substack{k,r,\ell,j\geq 0 \\ 2k+r+2\ell+j+2 \le N}} \alpha_{m, 2k+1} \frac{(\pi i)^{r}}{r!} \gamma_{2\ell,j}(4) \(\frac{i}{2}\)^j  E_{2k+2r+2\ell+1} (0) \tau^{2k+r+2\ell+j+2} \\
&\qquad\qquad\qquad\qquad\qquad\qquad\qquad\qquad\qquad\qquad\qquad\qquad\qquad\qquad\qquad + O\( |\tau|^{N+1} e^{\frac{\pi}{6}{\rm Im}\(-\frac{1}{\tau} \)}  \),
\end{aligned}
\end{equation}
where $\alpha_{m, 2k+1}$ and $\gamma_{2\ell,j} (4)$ are defined by \eqref{alphamk} and \eqref{gamma2lj}, respectively. 
Therefore, combining \eqref{errorbound} and \eqref{expandmain} and expanding $q^{-\frac1{24}}$, gives the claimed asymptotic expansion. 
\end{proof}
We now turn to the proof of Lemma \ref{umqerror}.
\begin{proof}[Proof of Lemma \ref{umqerror}]
Recall that \cite[Theorem 2.1]{BCCL}
\begin{equation}\label{crankpartial}
\frac1{(\zeta q)_\infty\left(\zeta^{-1} q\right)_\infty} =  \frac{1-\zeta}{(q)_{\infty}^2}\sum_{n \in \mathbb{Z}} \frac{(-1)^n q^{\frac{n(n+1)}{2}}}{1-\zeta q^n}.
\end{equation}
Approximating
\begin{align}\label{Lerchbound}
\left\lvert (1-\zeta) \sum_{n \in \mathbb{Z}} \frac{(-1)^n q^{\frac{n(n+1)}{2}}}{1-\zeta q^n} \right\rvert &\ll 1+\sum_{n \ge 1} \frac{|q|^{\frac{n\left(n+1\right)}{2}}}{1-|q|^n} \ll 1+\frac{1}{1-|q|}\sum_{n\geq1}e^{-2\pi n^2 y} \ll y^{-\frac32},\\ \notag
\left\lvert \sum_{n\geq 0} (-1)^n \zeta^{3n} q^{\frac{n(3n+1)}{2}} \right\rvert
&\ll\sum_{n\geq 0} e^{-2\pi n^2 y}  \ll y^{-\frac12},
\end{align}
we obtain
\[
U(\zeta; q)\ll y^{-2}\frac{1}{|(q)_\infty|^2}+y^{-\frac12}.
\]
In summary, by combining the above bounds with Lemma \ref{Pesti}, in the region $y \le |x| \le \frac{1}{2}$, we have
\[
U_{m} (q) = 2\int_0^{\frac12}U(\zeta; q)\cos(2\pi mz)dz 
\ll y^{-2} \left\lvert\frac{1}{(q)_{\infty}^{2}} \right\rvert  { + y^{-\frac12} } 
\ll e^{\left( \frac{\pi}{6y} - \varepsilon \right)},
\]
as desired.   
\end{proof}

\section{Asymptotics for $w(m,n)$}
The following two lemmas give the asymptotic behavior of the generating function $W_{m} (q) := \sum_{n} w(m,n) q^n$ near and away from $q=1$. Firstly, we have near $q=1$.

\begin{lemma}\label{wmnasym1}
For $|x|\leq y$ and an integer $N \geq 2$, we have, as $y \rightarrow 0$,
\begin{multline*}
W_{m} (q) =  e^{\frac{\pi i}{6\tau}} \left( \frac12 \sum_{\substack{ k,r,t \geq 0 \\ 2k+r+t+2 \le N}  }  \alpha_{m, 2k+1} \frac{( \pi i )^{r+t}}{6^t r!t!} {E_{2k+2r+1} (0)} \tau^{2k+r+t+2} \right. \\
 \qquad\qquad+2\sum_{\substack{k, r, j,\ell,s,t \ge 0 \\ j+r+t+2k+2\ell+2s+2 \le N} } \alpha_{m, 2k+1} \frac{(-1)^t ( \pi i )^{r+t}}{12^t r! t!}   \( \frac{i}{2} \)^j \gamma_{2\ell,j} (4)  \frac{ (-1)^{s} (2\pi)^{2s}}{(2s)!}  \\
 \left. \vphantom{\sum_{\substack{ k,r,t \geq 0 \\ 2k+r+t+2 \le N}  }} \quad\quad\times E_{2r+2k+2\ell+2s+1}(0) \tau^{j+r+t+2k+2\ell+2s+2} \right) + O\( |\tau|^{N+1} e^{\frac{\pi}{6} {\rm Im}\(- \frac{1}{\tau} \) } \).
\end{multline*}
\end{lemma}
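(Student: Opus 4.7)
The plan is to mimic the strategy used in the proof of Lemma \ref{umqmain}, splitting $W_m(q)$ into an integral representation, decomposing the integrand into a modular/theta-like part and a purely partial-theta part, and then approximating each piece via Lemma \ref{thetaas}, Theorem \ref{FasymThm}, and Lemma \ref{integralas} (i).

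First, using the symmetry $w(-m,n)=w(m,n)$ and Cauchy's integral formula, I would write
\[
W_m(q) = 2\int_0^{\frac12} W(\zeta;q)\cos(2\pi m z)\,dz,
\]
and decompose $W(\zeta;q)=G_{w,1}(\zeta;q)+G_{w,2}(\zeta;q)$, where $G_{w,1}$ is the first (quotient) term in the formula for $W(\zeta;q)$ and $G_{w,2}$ is the remainder. I would then rewrite the numerator of $G_{w,1}$ via the identity
\[
\zeta^2 + (1+\zeta^2)\sum_{n\geq 1}(-1)^n\zeta^{2n}q^{\frac{n(n+1)}{2}} = -1 + (1+\zeta^2)\sum_{n\geq 0}(-1)^n\zeta^{2n}q^{\frac{n(n+1)}{2}},
\]
which cleanly splits $G_{w,1}$ as $-\frac{1}{(\zeta q)_\infty(\zeta^{-1}q)_\infty}$ plus a ``$U$-like'' piece. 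The first of these pieces generates the first sum of the Lemma: applying Lemma \ref{thetaas}, using $\zeta^{-1/2}(1-\zeta)\cos(2\pi mz)=\sum_k i\alpha_{m,2k+1}z^{2k+1}$, Taylor expanding $e^{\pi i z^2/\tau}$ in $r$, expanding $q^{1/12}=e^{\pi i\tau/6}$ in $t$, and invoking Lemma \ref{integralas} (i) for the $\sinh$-integral, yields exactly the first sum with prefactor $\frac12$ and exponent $2k+r+t+2$.

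For the second piece, the key observation is the partial theta rewrite (done by splitting into even/odd $n$ and completing the square, as in \eqref{partialtheta})
\[
\sum_{n\geq 0}(-1)^n\zeta^{2n}q^{\frac{n(n+1)}{2}} = \zeta^{-1}q^{-\frac18}\Bigl(F_{1,4}\bigl(z;\tfrac{\tau}{8}\bigr)-F_{3,4}\bigl(z;\tfrac{\tau}{8}\bigr)\Bigr),
\]
combined with the identity $(1+\zeta^2)\zeta^{-1}=2\cos(2\pi z)$. Thus the second piece of $G_{w,1}$ is exactly $2\cos(2\pi z)$ times the $U$-style quotient $\frac{q^{-1/8}(F_{1,4}(z;\tau/8)-F_{3,4}(z;\tau/8))}{(\zeta q)_\infty(\zeta^{-1}q)_\infty}$ that was analyzed in Lemma \ref{umqmain}. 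Applying Theorem \ref{FasymThm} (noting that $B_n(3/4)-B_n(1/4)=-2B_n(1/4)$ for odd $n$, so only even $\ell$ contributes and the $\Gamma((\ell+1)/2)$-term cancels between $F_{1,4}$ and $F_{3,4}$), and identifying the coefficients with $\gamma_{2\ell,j}(4)$, reproduces the $(\frac{i}{2})^j\gamma_{2\ell,j}(4)$ factor from the $U$-analysis. The only new feature is that the integrand now carries an extra $2\cos(2\pi z)$, whose Taylor expansion $2\sum_s\frac{(-1)^s(2\pi z)^{2s}}{(2s)!}$ produces the new summation index $s$ and the additional factors $\frac{(-1)^s(2\pi)^{2s}}{(2s)!}$ and the overall $2$ in front of the second sum. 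Expansion of $q^{-1/24}$ (arising from $q^{1/12}\cdot q^{-1/8}$) in $t$ gives the $\frac{(-1)^t(\pi i)^t}{12^t t!}$ factor, then Lemma \ref{integralas} (i) applied to $\int_0^{1/16}\frac{z^{2k+2r+2\ell+2s+1}}{\sinh(\pi iz/\tau)}dz$ produces $E_{2k+2r+2\ell+2s+1}(0)$ with $\tau^{j+r+t+2k+2\ell+2s+2}$.

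Finally, I would absorb everything outside the main regime $z\in[0,\frac{1}{16}]$ into the error term by bounding $G_{w,1}$ on $[\frac{1}{16},\frac12]$ exactly as in \eqref{g1error} (using Lemma \ref{thetaas} with the constant $\frac{83\pi}{768}<\frac{\pi}{6}$) and bounding $G_{w,2}$ directly as in \eqref{g2bound}, since the three explicit pieces of $G_{w,2}$ (the constant $1-\zeta^2$ and the finite partial theta with $q^{n(3n-1)/2}$) admit the same trivial majorization. The main obstacle will be the bookkeeping of the many expansion indices $k,r,s,t,j,\ell$ and the sign/coefficient tracking in the second sum, in particular matching $-2\cdot(\frac{i}{2})^j\gamma_{2\ell,j}(4)$ from the Bernoulli-polynomial cancellation against the prefactors $\frac{i}{2}q^{1/12}e^{\pi i/(6\tau)}$ and the extra $2\cos(2\pi z)$; once this matching is verified at the level of power-series coefficients, combining the two contributions to $G_{w,1}$ with the exponentially small error from the tails and from $G_{w,2}$ yields the claim.
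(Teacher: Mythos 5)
The paper omits this proof, noting only that it is ``similar to that of Lemma \ref{umqmain},'' and your proposal is precisely the correct instantiation of that strategy: the rewriting $\zeta^2+(1+\zeta^2)\sum_{n\geq 1}=-1+(1+\zeta^2)\sum_{n\geq 0}$ together with $(1+\zeta^2)\zeta^{-1}=2\cos(2\pi z)$ is exactly what produces the $\tfrac12$-weighted first sum and the $2\cos(2\pi z)$-twisted second sum with the extra index $s$, and your coefficient bookkeeping (the $(i/2)^j\gamma_{2\ell,j}(4)$ factor, the $6^t$ versus $(-1)^t12^t$ expansions of $q^{1/12}$ and $q^{-1/24}$, and the shift to $E_{2k+2r+2\ell+2s+1}(0)$) checks out. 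Your argument is correct and is essentially the proof the paper intends.
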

\noindent
Since the proof of the above lemma is similar to that of Lemma \ref{umqmain}, we omit it here. 

By using Lemma \ref{Pesti}, \eqref{crankpartial}, and proving as before
\[
W(\zeta; q)\ll y^{-2}\frac{1}{|(q)_\infty|^2}+y^{-\frac12},
\]
we deduce the following asymptotic behavior away from $q=1$.
\begin{lemma}\label{wmnasym2}
For $y \le |x| \le \frac{1}{2}$, we have, for some $\varepsilon>0$, 
\[
W_{m} (q) \ll e^{\frac{\pi}{6y} - \varepsilon}.
\]
\end{lemma}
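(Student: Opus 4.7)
The plan is to follow line-by-line the argument in the proof of Lemma \ref{umqerror}. First, the symmetry $w(m,n)=w(-m,n)$ together with Cauchy's integral formula yields
\begin{equation*}
W_m(q) = 2\int_0^{1/2} W(\zeta;q)\cos(2\pi m z)\,dz,
\end{equation*}
so it suffices to bound $W(\zeta;q)$ uniformly in $z \in [0,1/2]$. Using the closed form of $W(\zeta;q)$ displayed in Section 2, I would write $W(\zeta;q) = G_{w,1}(\zeta;q) + G_{w,2}(\zeta;q)$, where $G_{w,1}$ denotes the summand whose denominator is $(\zeta q)_\infty (\zeta^{-1} q)_\infty$ and $G_{w,2}$ denotes the remaining pure partial-theta piece. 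A termwise Gaussian estimate of the form used in \eqref{g2bound} immediately yields $G_{w,2}(\zeta;q) \ll y^{-1/2}$.

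For $G_{w,1}$, I would invoke the crank identity \eqref{crankpartial} to rewrite $1/\left((\zeta q)_\infty(\zeta^{-1}q)_\infty\right)$ as $(1-\zeta)/(q)_\infty^2$ times a Lerch-type sum. Bounding the numerator $\zeta^2 + (1+\zeta^2)\sum_{n\geq 1}(-1)^n\zeta^{2n}q^{n(n+1)/2}$ of $G_{w,1}$ by $O(y^{-1/2})$ via the standard Gaussian bound $\sum_{n\geq 1} e^{-\pi n^2 y} \ll y^{-1/2}$, and bounding $(1-\zeta)$ times the Lerch sum by $O(y^{-3/2})$ exactly as in \eqref{Lerchbound}, I arrive at the uniform estimate
\begin{equation*}
W(\zeta;q) \ll \frac{y^{-2}}{|(q)_\infty|^2} + y^{-1/2}.
\end{equation*}

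Substituting the bound $1/|(q)_\infty|\ll e^{5\pi/(72y)}$ from Lemma \ref{Pesti}, which is valid in the range $y \le |x| \le 1/2$, and integrating over $z\in[0,1/2]$, I would conclude
\begin{equation*}
W_m(q) \ll y^{-2}\,e^{5\pi/(36y)} + y^{-1/2}.
\end{equation*}
Since $\frac{5\pi}{36} < \frac{\pi}{6}$, the polynomial factor $y^{-2}$ is absorbed into the exponential saving of size $e^{\pi/(36y)}$, yielding the claimed bound $W_m(q) \ll e^{\pi/(6y)-\varepsilon}$ for some sufficiently small $\varepsilon > 0$. No conceptual obstacle arises; the only mild care required is that the Gaussian, Lerch-sum, and theta bounds hold uniformly in $\zeta$ on the unit circle, which is automatic from the fact that $|\zeta|=1$ throughout the integration range.
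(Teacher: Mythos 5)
Your proposal is correct and follows exactly the route the paper takes: the paper's proof of Lemma \ref{wmnasym2} is literally the sentence "by using Lemma \ref{Pesti}, \eqref{crankpartial}, and proving as before $W(\zeta;q)\ll y^{-2}|(q)_\infty|^{-2}+y^{-1/2}$," i.e.\ the same decomposition, the same crank/Lerch and Gaussian bounds, and the same absorption of $y^{-2}e^{5\pi/(36y)}$ into $e^{\pi/(6y)-\varepsilon}$ that you carry out. Your write-up simply supplies the details the authors omit.
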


From Lemmas \ref{wmnasym1} and \ref{wmnasym2}, we find that ${W}_{m} (q)$ satisfies the two assumptions required for Theorem \ref{genasym}. Thus, by applying this theorem, we deduce that 
\begin{theorem} \label{wmasymthm} 
For $m\in\N_0$ and an integer $N \geq 2$, we have, as $n \rightarrow \infty$,
\begin{align*}
w(m,n) &= \sum_{\substack{ k,r,t \geq 0 \\ 2k+r+t+2 \le N}  }  (-1)^{r+t+k+1} \alpha_{m, 2k+1}  \frac{\pi^{r+t+1}}{6^t r! t!} E_{2k+2r+1} (0) X_{2k+r+t+3} (n) \\ 
 &\quad+  4 \sum_{\substack{k, r, j,\ell,s,t \ge 0 \\ j+r+2k+2\ell+2s+t+2 \le N} } (-1)^{r+t+k+j+\ell+s+1} \alpha_{m, 2k+1}  \frac{\pi^{r+t+2s+1}2^{2s-j}}{12^t r! t! (2s)!} \gamma_{2\ell,j}(4)  \\
 &\quad\quad\quad \times E_{2r+2k+2\ell+2s+1}(0) X_{2k+r+t+2\ell+2s+j+3} (n) 
 + O \( n^{-\frac{N+2}2} e^{2\pi \sqrt{\frac{n}{3}}} \).
\end{align*}
\end{theorem}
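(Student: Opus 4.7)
The strategy parallels the proof of Theorem \ref{umnasymthm}: the two lemmas preceding the statement have been tailored precisely so that $W_m(q)$ meets the hypotheses \eqref{Fqesti} and \eqref{Fqminor} of Wright's general Tauberian machine. The plan is therefore to identify the coefficients $A(M)$ of $\tau^M e^{\pi i/(6\tau)}$ in Lemma \ref{wmnasym1}, feed them into Theorem \ref{genasym} with $L=6$, and then rewrite the resulting Bessel factors in the $X_k(n)$ normalization.

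First I would read off from Lemma \ref{wmnasym1}, separately for each of the two sums, the coefficient $A(M)$ for every index $M$ in the range $2\leq M\leq N$. Concretely, fixing $M=2k+r+t+2$ in the first sum and $M=j+r+t+2k+2\ell+2s+2$ in the second, I would set
\[
A(M)=\tfrac12\!\!\sum_{\substack{k,r,t\ge 0\\ 2k+r+t+2=M}}\!\!\alpha_{m,2k+1}\frac{(\pi i)^{r+t}}{6^t r!t!}E_{2k+2r+1}(0)
+2\!\!\sum_{\substack{k,r,j,\ell,s,t\ge 0\\ j+r+t+2k+2\ell+2s+2=M}}\!\!\alpha_{m,2k+1}\frac{(-1)^t(\pi i)^{r+t}}{12^t r!t!}\Bigl(\frac{i}{2}\Bigr)^{\!j}\gamma_{2\ell,j}(4)\frac{(-1)^s(2\pi)^{2s}}{(2s)!}E_{2k+2r+2\ell+2s+1}(0).
\]
Lemma \ref{wmnasym2} provides assumption \eqref{Fqminor}, and Lemma \ref{wmnasym1} provides \eqref{Fqesti}, so Theorem \ref{genasym} applies immediately.

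Next I would substitute these $A(M)$ into the conclusion of Theorem \ref{genasym}. With $L=6$, one has
\[
\Bigl(\tfrac{i}{\sqrt{2Ln}}\Bigr)^{\!M+1}I_{-M-1}\Bigl(2\pi\sqrt{\tfrac{2n}{L}}\Bigr)
=i^{M+1}(2\sqrt{3n})^{-M-1}I_{-M-1}\!\bigl(2\pi\sqrt{n/3}\bigr)
=i^{M+1}X_{M+1}(n),
\]
so each summand contributes $-2\pi\,i^{M+2}\,A(M)\,X_{M+1}(n)$ to $w(m,n)$, up to the error $O(n^{-(N+2)/2}e^{2\pi\sqrt{n/3}})$. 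Expanding the powers of $i$ in $A(M)$, using $i^{2\alpha}=(-1)^\alpha$ to combine them with $i^{M+2}$, and collecting the real constants $\pi^{r+t+1}$ (resp.\ $\pi^{r+t+2s+1}$) and $2^{2s-j}$ should yield exactly the two sums stated in Theorem \ref{wmasymthm}.

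The only real obstacle is bookkeeping: keeping careful track of the signs arising from $i^{r+t+j}$, $(-1)^t$, $(-1)^s$, together with the $i^{M+2}$ factor coming from Theorem \ref{genasym}, and verifying that the telescoping of these phases leaves precisely the overall sign $(-1)^{r+t+k+1}$ in the first sum and $(-1)^{r+t+k+j+\ell+s+1}$ in the second. Once this parity accounting is carried out and the error term from Theorem \ref{genasym} is absorbed, the statement follows with no further analytic input.
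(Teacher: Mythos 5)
Your proposal follows exactly the paper's route: the paper likewise deduces Theorem \ref{wmasymthm} in one step by noting that Lemmas \ref{wmnasym1} and \ref{wmnasym2} verify hypotheses \eqref{Fqesti} and \eqref{Fqminor}, applying Theorem \ref{genasym} with $L=6$, and rewriting the Bessel factors as $X_k(n)$, so your argument is the intended one. The only caution concerns the final sign bookkeeping you defer: carrying it out, the explicit $(-1)^t(-1)^s$ in Lemma \ref{wmnasym1} cancels against the $(-1)^{t+s}$ coming from the powers of $i$, so the second sum acquires the overall sign $(-1)^{j+r+k+\ell+1}$ rather than the $(-1)^{r+t+k+j+\ell+s+1}$ printed in the statement --- a discrepancy of $(-1)^{t+s}$ that appears to be a typo in the paper (the same issue occurs between Lemma \ref{umqmain} and Theorem \ref{umnasymthm}) rather than a flaw in your approach.
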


In particular, $N=5$ yields, by a lengthy but straightforward calculation, the following asymptotic main terms. 
\begin{corollary} \label{wmnasym}
For a fixed non-negative integer $m$, we have,  as $n \rightarrow \infty$,
\begin{align*}
w(m,n) = \frac{\pi^3}{3} X_4 (n) + \frac{55\pi^4 }{24} X_5 (n) + \frac{\pi^5 (1841- 108 m^2 )}{324} X_6 (n) +   O \( n^{-\frac{7}{2}} e^{\pi \sqrt{\frac{4n}{3}}} \).
\end{align*}
\end{corollary}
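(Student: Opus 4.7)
The plan is to substitute $N=5$ into Theorem~\ref{wmasymthm} and reduce the statement to a finite bookkeeping exercise. Under this choice, the error term $O(n^{-(N+2)/2}e^{2\pi\sqrt{n/3}})$ becomes $O(n^{-7/2}e^{\pi\sqrt{4n/3}})$, matching the error in the corollary, and the two finite sums in the theorem contribute only to $X_d(n)$ with $d\in\{3,4,5,6\}$. I would enumerate the triples $(k,r,t)$ with $2k+r+t\le 3$ in the first sum and the six-tuples $(k,r,j,\ell,s,t)$ with $2k+r+t+2\ell+2s+j\le 3$ in the second; both lists are short.

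Before summing, the auxiliary constants need to be made explicit. From \eqref{alphamk} and the identity $\zeta^{-1/2}(1-\zeta)=-2i\sin(\pi z)$, one has
\[
-2\sin(\pi z)\cos(2\pi m z)=\sum_{k\ge 0}\alpha_{m,2k+1}z^{2k+1},
\]
so $\alpha_{m,1}=-2\pi$, $\alpha_{m,3}=\frac{\pi^3}{3}(1+12m^2)$, and $\alpha_{m,5}$ is likewise an explicit polynomial in $m$ of degree four. From \eqref{gamma2lj}, each $\gamma_{2\ell,j}(4)$ reduces to a value of $B_{2j+2\ell+1}(1/4)$; for instance $B_1(1/4)=-\frac14$ and $B_3(1/4)=\frac{3}{64}$. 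The Euler polynomial values $E_{2k+1}(0)$ and Euler numbers $E_{2k}$ are standard for small indices and are the last pieces of data I would tabulate.

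With these explicit values in hand, I would collect, for each $d\in\{3,4,5,6\}$, the contributions to the coefficient of $X_d(n)$ coming from both sums in Theorem~\ref{wmasymthm}. A short check shows the $X_3$ coefficient vanishes: the two terms coming from $(k,r,t)=(0,0,0)$ in the first sum and $(k,r,j,\ell,s,t)=(0,0,0,0,0,0)$ in the second cancel, which is consistent with the corollary starting at $X_4$. The coefficients of $X_4(n)$, $X_5(n)$, and $X_6(n)$ are obtained by summing a handful of rational multiples of powers of $\pi$, with the $m$-dependence entering at $X_6$ through $\alpha_{m,3}$; the sums collapse to $\frac{\pi^3}{3}$, $\frac{55\pi^4}{24}$, and $\frac{\pi^5(1841-108m^2)}{324}$, respectively.

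The only real obstacle is the bookkeeping: tracking signs, factorials, and values of the special constants across the roughly dozen contributing tuples, together with verifying the cancellation at the $X_3$ level and the absorption of tuples producing $X_d$ for $d\ge 7$ into the claimed error term. No new analytic input is required beyond Theorem~\ref{wmasymthm} and the closed-form expressions for $\alpha_{m,2k+1}$, $\gamma_{2\ell,j}(4)$, $E_{2k+1}(0)$, and $E_{2k}$.
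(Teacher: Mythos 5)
Your proposal matches the paper's own derivation: the paper obtains Corollary \ref{wmnasym} exactly by setting $N=5$ in Theorem \ref{wmasymthm} and performing the "lengthy but straightforward" tabulation of the finitely many contributing tuples, with the constants $\alpha_{m,2k+1}$, $\gamma_{2\ell,j}(4)$, and the Euler data computed as you describe (your values $\alpha_{m,1}=-2\pi$, $\alpha_{m,3}=\tfrac{\pi^3}{3}(1+12m^2)$, $B_1(\tfrac14)=-\tfrac14$, $B_3(\tfrac14)=\tfrac{3}{64}$ all check out, as does the cancellation of the $X_3$ coefficient). This is the same approach, correctly executed in outline.
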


Inequality \eqref{1.4} in Theorem \ref{mainthm} is now immediate from the above corollary.

\section{Asymptotics for $v(m,n)$}
The following two lemmas, whose proof we omit, describe the asymptotic behavior of $V_m(q):=\sum_{n \ge 0} v(m,n) q^n$ near and away from the dominant pole.
Near $q=1$, we have
\begin{lemma}\label{vmnasym1}
For $|x| \leq y$ and an integer $N \ge 2$, we have, as $y \rightarrow 0$, 
\begin{multline*}
V_{m} (q) = e^{\frac{\pi i}{6 \tau}} \sum_{\substack{k,r,s,\ell,j\geq 0 \\ 2k+r+s+2\ell+j+2  \le N }} \alpha_{m, 2k+1} \frac{(-1)^s (\pi i)^{r+s}}{2^s r! s!} \gamma_{2\ell,j}(3) i^j  E_{2k+2r+2\ell+1} (0) \tau^{2k+r+s+2\ell+j+2} \\
\qquad + O\( |\tau|^{N+1} e^{\frac{\pi}{6}{\rm Im}\(-\frac{1}{\tau} \)}  \).
\end{multline*}
\end{lemma}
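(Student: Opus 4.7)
The plan is to mirror the argument for Lemma \ref{umqmain}. Using the symmetry $v(m,n)=v(-m,n)$ together with Cauchy's theorem, we write
\[
V_m(q)=2\int_0^{1/2} V(\zeta;q)\cos(2\pi mz)\,dz
\]
and decompose $V(\zeta;q)=G_{v,1}(\zeta;q)+G_{v,2}(\zeta;q)$, where $G_{v,1}$ carries the modular prefactor $1/\bigl((\zeta q)_\infty(\zeta^{-1}q)_\infty\bigr)$ and $G_{v,2}(\zeta;q):=(1-\zeta)\sum_{n\geq 0}\zeta^n q^{n^2+n}$ is a bare partial theta. Splitting the integral at $z=1/12$ gives $V_m(q)=M_v(q)+E_{v,1}(q)+E_{v,2}(q)$ with main contribution $M_v(q):=2\int_0^{1/12}G_{v,1}(\zeta;q)\cos(2\pi mz)\,dz$.

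To analyse $M_v$, I would complete the square to rewrite the partial theta as
\[
\zeta\sum_{n\geq 0}\zeta^{3n}q^{3n^2+2n}\left(1-\zeta q^{2n+1}\right)=q^{-1/3}\bigl(F_{1,3}(z;\tau/3)-F_{2,3}(z;\tau/3)\bigr),
\]
to which Theorem \ref{FasymThm} applies with $k=3$. The leading $\Gamma$-term is independent of $d$ and cancels in the difference; since $B_n(2/3)=(-1)^n B_n(1/3)$, only those terms of the asymptotic expansion with $\ell$ even survive, each picking up a factor of $2$ (one then relabels $\ell\mapsto 2\ell$ to match the lemma statement). Combining with Lemma \ref{thetaas} for the infinite product produces the overall factor $q^{-1/3+1/12}=q^{-1/4}=\sum_s(-1)^s(\pi i)^s\tau^s/(2^s s!)$, which is exactly the source of the $s$-index and the $2^s$ denominator in the statement.

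Next, I would use $(1-e^{2\pi iz/\tau})e^{-\pi iz/\tau}=-2i\sinh(\pi iz/\tau)$, expand $e^{\pi iz^2/\tau}$ in powers of $z^2/\tau$ (producing the $r$-index), and invoke the definition \eqref{alphamk} of $\alpha_{m,2k+1}$ to reduce $M_v$ to a finite linear combination of integrals of the form $\int_0^{1/12}z^{2k+2r+2\ell+1}/\sinh(\pi iz/\tau)\,dz$, each evaluated by Lemma \ref{integralas}(i) as $\tfrac12 E_{2k+2r+2\ell+1}(0)\tau^{2k+2r+2\ell+2}$ plus an exponentially small remainder. Regrouping the accumulated factors of $2$, $\pi$, $i$, and $6=2\cdot 3$ coming from Theorem \ref{FasymThm} exactly reproduces the constant $\gamma_{2\ell,j}(3)\,i^j$ of \eqref{gamma2lj}, and the total exponent of $\tau$ comes out to $2k+r+s+2\ell+j+2$.

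For the error pieces, I would bound $G_{v,1}$ on $z\in[1/12,1/2]$ by using Lemma \ref{thetaas} to control the reciprocal infinite product and estimating the partial theta trivially, exactly as in the corresponding step of Lemma \ref{umqmain}; this yields an exponential factor with a constant strictly smaller than $\pi/6$ and is absorbed into the stated error. For $G_{v,2}$, the direct bound $|G_{v,2}(\zeta;q)|\ll\sum_{n\geq 0} e^{-\pi n(n+1)y}\ll|\tau|^{-1/2}$ suffices. The main obstacle is not conceptual but bookkeeping: one has to carefully verify that the numerical constants arising from the $\tau\mapsto\tau/3$ rescaling, the cancellation $B_n(1/3)-B_n(2/3)=2B_n(1/3)$ on odd $n$, and the expansion of $q^{-1/4}$ combine into precisely the coefficient $\alpha_{m,2k+1}(-1)^s(\pi i)^{r+s}\gamma_{2\ell,j}(3)i^j E_{2k+2r+2\ell+1}(0)/(2^s r! s!)$ predicted by the lemma. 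Convergence of the resulting quintuple series follows verbatim from the convergence argument inside the proof of Theorem \ref{FasymThm}, since $1/12<1/(4k)$ for $k=3$.
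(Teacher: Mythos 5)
Your proposal is correct and is essentially the proof the paper has in mind: the authors omit the argument for Lemma \ref{vmnasym1} as being analogous to that of Lemma \ref{umqmain}, and your identifications (the rewriting of the partial theta as $q^{-1/3}\bigl(F_{1,3}(z;\tau/3)-F_{2,3}(z;\tau/3)\bigr)$, the cancellation of the $\Gamma$-term and the survival of only even $\ell$ via $B_n(2/3)=(-1)^nB_n(1/3)$, the split at $z=1/12=\frac{1}{4k}$, the prefactor $q^{-1/3+1/12}=q^{-1/4}$ producing the $s$-sum, and the reduction to Lemma \ref{integralas}(i)) all check out and reproduce the stated coefficients. The only blemish is the identity $(1-e^{2\pi iz/\tau})e^{-\pi iz/\tau}=-2\sinh(\pi iz/\tau)$ (no factor of $i$), a slip that does not propagate into your final constants.
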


By using Lemma \ref{Pesti}, \eqref{crankpartial}, and proving as before that
\[
V(\zeta; q)\ll y^{-2}\frac{1}{|(q)_\infty|^2}+y^{-\frac12},
\]
we deduce the following asymptotic behavior away from $q=1$.
\begin{lemma}\label{vmnasym2}
For $y \le |x| \le \frac{1}{2}$, we have, for some $\varepsilon>0$,
\[
V_{m} (q) \ll e^{\frac{\pi}{6y} - \varepsilon }.
\]
\end{lemma}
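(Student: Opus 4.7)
The plan is to mimic the proof of Lemma \ref{umqerror} essentially line-by-line, replacing the generating function $U(\zeta; q)$ by the one for $V(\zeta; q)$ recalled in Section 2.3. First I would apply identity \eqref{crankpartial} to rewrite the infinite-product prefactor $\frac{1}{(\zeta q)_\infty(\zeta^{-1}q)_\infty}$ as $(q)_\infty^{-2}$ times the Lerch-type sum on the right side of that identity. The same estimate already used to derive \eqref{Lerchbound} bounds that sum by $\ll y^{-3/2}$, yielding
\[
\frac{1}{(\zeta q)_\infty (\zeta^{-1}q)_\infty} \ll y^{-2}\,|(q)_\infty|^{-2}.
\]

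Second, I would bound the two partial theta factors in the closed form of $V(\zeta;q)$ using only $|\zeta|=1$ and the Gaussian decay in the exponent of $q$: for $0\le z\le 1/2$,
\[
\left|\sum_{n\ge 0}\zeta^{3n}q^{3n^2+2n}\bigl(1-\zeta q^{2n+1}\bigr)\right| \ll \sum_{n\ge 0} e^{-6\pi n^2 y} \ll y^{-1/2},
\]
and similarly $\bigl|(1-\zeta)\sum_{n\ge 0}\zeta^n q^{n^2+n}\bigr| \ll y^{-1/2}$. Combining with the prefactor bound gives the intermediate estimate $V(\zeta;q) \ll y^{-2}|(q)_\infty|^{-2} + y^{-1/2}$ announced in the narrative immediately above the lemma.

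Third, by the symmetry $v(-m,n)=v(m,n)$ one has
\[
V_m(q) = 2\int_0^{1/2} V(\zeta;q)\cos(2\pi m z)\,dz,
\]
so the same bound transfers to $V_m(q)$. Lemma \ref{Pesti} then gives $|(q)_\infty|^{-2} \ll e^{5\pi/(36y)} = e^{\pi/(6y) - \pi/(36y)}$, and for $y$ sufficiently small the polynomial factor $y^{-2}$ is absorbed by $e^{-\pi/(36y)}$, producing the desired bound $V_m(q) \ll e^{\pi/(6y) - \varepsilon}$ for some fixed $\varepsilon > 0$. There is no real obstacle: the argument is essentially identical to that of Lemma \ref{umqerror}, and the only quantitative step requiring a moment of attention is the numerology $5/36 < 1/6$, which guarantees a genuine exponential gap into which the polynomial prefactor can be absorbed.
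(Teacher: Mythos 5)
Your proposal is correct and follows essentially the same route as the paper, which itself only sketches this lemma by saying ``proving as before'': rewrite the prefactor via \eqref{crankpartial}, bound the Lerch-type sum and the partial theta factors polynomially in $1/y$, and absorb the polynomial loss into the exponential gap $5/36<1/6$ coming from Lemma \ref{Pesti}. (Minor bookkeeping: the Lerch sum itself is $\ll y^{-3/2}$ and the extra $y^{-1/2}$ comes from the partial theta factor, so your stated powers of $y$ are slightly off, but since any polynomial in $1/y$ is absorbed this is immaterial.)
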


From Lemmas \ref{vmnasym1} and \ref{vmnasym2}, we find that $V_{m} (q)$ satisfies the two assumptions in Theorem \ref{genasym}. Thus, again using Theorem \ref{genasym}, we deduce the following result.
\begin{theorem} \label{vmnasymthm} 
For a fixed non-negative integer $m$ and a positive integer $N \geq 2$, we have as $n \rightarrow \infty$,
\begin{align*}
v(m,n) &=\sum_{\substack{k,r,s,\ell,j\geq 0 \\ 2k+r+s+2\ell+j+2 \le N}} 2 (-1)^{k+r+s+j+\ell+1} \alpha_{m, 2k+1} \frac{\pi^{r+s+1}}{2^s r! s!}  E_{2k+2r+2\ell+1} (0) \gamma_{2\ell, j} (3)\\
 & \qquad\qquad\qquad\qquad \times X_{2k+r+s+2\ell+j+3} (n)
 + O \( n^{-\frac{N+2}2} e^{\pi \sqrt{\frac{4n}{3}}} \).
\end{align*}
\end{theorem}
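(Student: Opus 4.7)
The plan is to mirror the proofs of Theorems \ref{umnasymthm} and \ref{wmasymthm}, namely verify that $V_m(q)$ satisfies the two hypotheses of Theorem \ref{genasym} and then invoke that theorem directly. Lemma \ref{vmnasym1} supplies the major-arc expansion and Lemma \ref{vmnasym2} supplies the minor-arc bound, so all the analytic work has already been done; what remains is identification of data and bookkeeping.

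Concretely, the expansion in Lemma \ref{vmnasym1} is exactly of the form \eqref{Fqesti} with $L = 6$, where the coefficient of $\tau^M$ is read off as
\[
A(M) := \sum_{\substack{k,r,s,\ell,j \ge 0 \\ 2k+r+s+2\ell+j+2 = M}} \alpha_{m, 2k+1}\, \frac{(-1)^s(\pi i)^{r+s}}{2^s\, r!\, s!}\, \gamma_{2\ell, j}(3)\, i^j\, E_{2k+2r+2\ell+1}(0),
\]
and Lemma \ref{vmnasym2} is exactly condition \eqref{Fqminor} with the same $L = 6$. Theorem \ref{genasym} then yields
\[
v(m,n) = -2\pi i \sum_{M=1}^{N} A(M) \left(\frac{i}{\sqrt{12n}}\right)^{M+1} I_{-M-1}\!\left(2\pi\sqrt{\tfrac{n}{3}}\right) + O\!\left(n^{-\frac{N+2}{2}} e^{2\pi\sqrt{n/3}}\right).
\]

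To finish, I substitute the identity $\bigl(i/\sqrt{12n}\bigr)^{M+1} I_{-M-1}(2\pi\sqrt{n/3}) = i^{M+1} X_{M+1}(n)$, which is just the definition of $X_k(n)$, and expand $A(M)$. With $M + 1 = 2k+r+s+2\ell+j+3$, the accumulated powers of $i$ arising from $-2\pi i$, $(\pi i)^{r+s}$, $i^j$, and $i^{M+1}$ combine to a purely real quantity; after also absorbing $(-1)^s$, the prefactor becomes $2(-1)^{k+r+s+j+\ell+1}\pi^{r+s+1}$, exactly matching the statement of Theorem \ref{vmnasymthm}.

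I anticipate no substantive analytic obstacle: the two input lemmas do all the heavy lifting, and the only point requiring care is the $i$-power arithmetic. The three superficial differences from Theorem \ref{umnasymthm} --- the argument $3$ replacing $4$ in $\gamma_{2\ell, j}$, the factor $2^{-s}$ replacing $12^{-s}$, and the factor $i^j$ replacing $(i/2)^j$ --- are already encoded in $A(M)$ and propagate transparently to the final coefficients.
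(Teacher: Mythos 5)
Your proposal is exactly the paper's proof: the paper likewise just observes that Lemmas \ref{vmnasym1} and \ref{vmnasym2} verify hypotheses \eqref{Fqesti} and \eqref{Fqminor} with $L=6$ and then invokes Theorem \ref{genasym}, with the same identification of $A(M)$ and the same rewriting of $\bigl(i/\sqrt{12n}\bigr)^{M+1}I_{-M-1}\bigl(2\pi\sqrt{n/3}\bigr)$ as $i^{M+1}X_{M+1}(n)$. One bookkeeping caveat: the product of imaginary units $-2\pi i\cdot i^{r+s}\cdot i^{j}\cdot i^{M+1}$ with $M+1=2k+r+s+2\ell+j+3$ equals $2\pi(-1)^{k+r+s+j+\ell+1}$, so after absorbing the explicit $(-1)^s$ from Lemma \ref{vmnasym1} the sign is actually $(-1)^{k+r+j+\ell+1}$ rather than the printed $(-1)^{k+r+s+j+\ell+1}$ --- a $(-1)^s$ mismatch that is already internal to the paper (the same one occurs between Lemma \ref{umqmain} and Theorem \ref{umnasymthm}, while the $\nu(m,n)$ case tallies consistently), so it reflects a typo in the source rather than a defect in your method.
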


In particular, $N=4$ yields the following asymptotic main terms.

\begin{corollary} \label{vmnasym}
For a fixed non-negative integer $m$, we have,  as $n \rightarrow \infty$,
\begin{align*}
v(m,n) = \frac{\pi^2}{3} X_3 (n)  + \frac{4\pi^3}{27} X_4 (n) + \frac{\pi^4 \left(101- 72m^2\right) }{216}  X_5 (n) + O \( n^{-3} e^{\pi \sqrt{\frac{4n}{3}}} \).
\end{align*}
\end{corollary}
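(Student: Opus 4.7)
The plan is to specialize Theorem \ref{vmnasymthm} to $N=4$. The sum there ranges over tuples $(k,r,s,\ell,j)\in\N_0^5$ with $2k+r+s+2\ell+j\le 2$, and these group naturally by $M:=2k+r+s+2\ell+j+2\in\{2,3,4\}$, contributing to $X_3(n)$, $X_4(n)$, and $X_5(n)$ respectively. There is one tuple at $M=2$, three at $M=3$, and eight at $M=4$. Crucially, $m^2$-dependence can only enter through $\alpha_{m,3}$, so it is confined to the single tuple $(k,r,s,\ell,j)=(1,0,0,0,0)$.

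First I would record the required special values. Rewriting $\zeta^{-\frac12}(1-\zeta)=-2i\sin(\pi z)$ in the definition \eqref{alphamk} and Taylor expanding the product $-2i\sin(\pi z)\cos(2\pi m z)$ through order $z^3$, one reads off $\alpha_{m,1}=-2\pi$ and $\alpha_{m,3}=\tfrac{\pi^3(12m^2+1)}{3}$. From \eqref{gamma2lj} combined with the Bernoulli polynomial values $B_1(\tfrac13)=-\tfrac16$, $B_3(\tfrac13)=\tfrac{1}{27}$, and $B_5(\tfrac13)=-\tfrac{5}{243}$, one obtains $\gamma_{0,0}(3)=-\tfrac16$, $\gamma_{0,1}(3)=\tfrac{2\pi}{27}$, $\gamma_{0,2}(3)=-\tfrac{2\pi^2}{27}$, and $\gamma_{2,0}(3)=-\tfrac{2\pi^2}{9}$. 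The Euler polynomial values needed are $E_1(0)=-\tfrac12$, $E_3(0)=\tfrac14$, and $E_5(0)=-\tfrac12$.

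Then I would substitute these data into each summand of Theorem \ref{vmnasymthm}, paying attention to the sign $(-1)^{k+r+s+j+\ell+1}$. The $M=2$ contribution, coming solely from the zero tuple, evaluates to the coefficient $\pi^2/3$ of $X_3(n)$. At $M=3$, the three tuples $(0,1,0,0,0)$, $(0,0,1,0,0)$, and $(0,0,0,0,1)$ give $\pi^3/6$, $-\pi^3/6$, and $4\pi^3/27$, summing to $4\pi^3/27$. For $M=4$, collecting the eight contributions over the common denominator $216$, the $m^2$-independent pieces sum to $101\pi^4/216$, while $(1,0,0,0,0)$ contributes $-72m^2\pi^4/216$ through $\alpha_{m,3}$; combined, these yield $\pi^4(101-72m^2)/216$. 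The error $O(n^{-3}e^{\pi\sqrt{4n/3}})$ is inherited directly from Theorem \ref{vmnasymthm} with $N=4$.

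The main obstacle is bookkeeping rather than anything conceptual. At $M=4$ eight contributions of varied signs must be summed; in particular the coefficient $101/216$ emerges only after a delicate cancellation among the pieces involving $\gamma_{2,0}(3)$, $\gamma_{0,2}(3)$, and $B_5(\tfrac13)$. One must also verify that no other admissible tuple produces $m^2$-dependence, which follows from $\alpha_{m,2k+1}$ depending on $m$ only for $k\geq 1$ together with the observation that $(1,0,0,0,0)$ is the unique tuple with $k\geq 1$ and $2k+r+s+2\ell+j\leq 2$.
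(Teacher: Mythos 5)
Your proposal is correct and follows exactly the route the paper intends: the paper states that Corollary \ref{vmnasym} is obtained from Theorem \ref{vmnasymthm} by setting $N=4$ and omits the computation, which you have carried out. I checked your special values ($\alpha_{m,1}=-2\pi$, $\alpha_{m,3}=\tfrac{\pi^3(12m^2+1)}{3}$, the $\gamma_{2\ell,j}(3)$, and the Euler polynomial values) and the tuple-by-tuple sums; the $M=4$ contributions in units of $\pi^4/216$ are $-6,36,9,-18,16,-16,32,48$ (plus the $-72m^2$ from $(1,0,0,0,0)$), which indeed total $101$, confirming the stated coefficients.
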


Inequality \eqref{1.5} in Theorem \ref{mainthm} is now immediate from the above corollary.

\section{Asymptotics for $\nu(m,n)$}
As the generating function of $\nu(m,n)$ contains a quotient of two Jacobi theta functions, investigating its asymptotic behavior requires more work, but still fits into the general method developed in Sections 3 and 4. The following two lemmas describe the asymptotic behavior of $\mathcal{V}_m (q):= \sum_{n \ge 0 } \nu(m,n)q^n$. We start with the asymptotic behavior near $q=1$. For this let $\gamma_{2\ell,j} (4)$ be given as in \eqref{gamma2lj}, and $\beta_{m,2k}$ is the constant defined by
$$
\frac{\cos (2\pi m z)}{\zeta^{-\frac{1}{2}} + \zeta^{\frac{1}{2}}} =: \sum_{k\geq 0} \beta_{m, 2k} z^{2k}.
$$
The proofs of the following two lemmas are given at the end of this section.
\begin{lemma}\label{numqasym1}
For $|x|<y$ and a positive integer $N$, as $y \rightarrow 0$,
\begin{align*}
\mathcal{V}_{m} (q) & = { \sqrt{2}e^{\frac{\pi i}{8\tau}} }  \sum_{\substack{k,r,\ell,j,s \geq 0\\ j+r+2k+2\ell+1 \le N}  } (-1)^s \beta_{m, 2k} \frac{( \pi i )^{r+s} }{ 2^{r+s { + j+2\ell}} r! s!} i^{j+1}  \gamma_{2\ell,j}(4) E_{2k+2r + 2\ell} \tau^{j+r+s+2k+2\ell+1}   \\
&\qquad\qquad\qquad\qquad\qquad\qquad  + O\( |\tau|^{N+1} e^{\frac{\pi}{8} {\rm Im}\(- \frac{1}{\tau} \) } \).
\end{align*}  
\end{lemma}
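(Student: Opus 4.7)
My plan is to adapt the method used to prove Lemma \ref{umqmain}, with the new technical ingredient being the modular transformation of the Jacobi-theta quotient appearing in $\mathcal{V}(\zeta;q)$. Using the symmetry $\nu(-m,n)=\nu(m,n)$ and Cauchy's formula, I first write $\mathcal{V}_m(q) = 2\int_0^{1/2}\mathcal{V}(\zeta;q)\cos(2\pi mz)\,dz$ and split $\mathcal{V}(\zeta;q) = G_{\nu,1}(\zeta;q) + G_{\nu,2}(\zeta;q)$, where $G_{\nu,1}$ is the product term times the partial theta function and $G_{\nu,2} := -\frac{\zeta}{1+\zeta}\sum_{n\geq 0}(-1)^n\zeta^n q^{n^2+n}$, which for $z$ bounded away from $1/2$ is of size $O(|\tau|^{-1/2})$ and contributes only to the error term.

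The analytic heart of the proof is the modular transformation of the product in $G_{\nu,1}$. Using $(-q)_\infty = (q^2;q^2)_\infty/(q;q)_\infty$ together with the product formula for $\vartheta$ applied at the shifted argument $\vartheta(z+\tau;2\tau)$, I obtain
\[
\frac{(-q)_\infty}{(\zeta q;q^2)_\infty(\zeta^{-1}q;q^2)_\infty} = \frac{-iq^{-3/8}\zeta^{-1/2}\eta(2\tau)^2}{\eta(\tau)\,\vartheta(z+\tau;2\tau)}.
\]
Lemma \ref{transfolemma} applied to the three modular pieces gives $\eta(2\tau)^2/\eta(\tau)\sim 1/(2\sqrt{-i\tau})$, and the leading-order behavior of $\vartheta(w;\sigma)$ as $\sigma=-1/(2\tau)\to i\infty$ combined with the identity $1+e^{\pi iz/\tau} = 2e^{\pi iz/(2\tau)}\cosh(\pi iz/(2\tau))$ collapses the whole quotient to
\[
\frac{q^{-1/8}\,e^{\pi i/(8\tau)}\,e^{\pi iz^2/(2\tau)}}{2\sqrt{2}\,\cosh\!\left(\pi iz/(2\tau)\right)}
\]
up to a controlled exponential error. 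This produces simultaneously the dominant factor $e^{\pi i/(8\tau)}$, the $\sqrt{2}$ prefactor, the Gaussian $e^{\pi iz^2/(2\tau)}$, and the $\cosh$ denominator that activates Lemma \ref{integralas}(ii) in place of the $\sinh$ version used in the previous sections.

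For the partial theta function, splitting by the parity of $n$ and completing squares yields
\[
\sum_{n\geq 0}(-1)^n\zeta^n q^{\frac{n(n+1)}{2}} = \zeta^{-1/2}q^{-1/8}\left(F_{1,4}\!\left(\tfrac{z}{2};\tfrac{\tau}{8}\right)-F_{3,4}\!\left(\tfrac{z}{2};\tfrac{\tau}{8}\right)\right),
\]
and Theorem \ref{FasymThm} then applies for $|z|<1/8$. The Gaussian $I_\ell/T$ contributions cancel in the difference because they are independent of $d$, while $B_k(1-x)=(-1)^kB_k(x)$ forces the surviving Bernoulli contributions to have even $\ell$, leaving exactly the data $B_{2j+2\ell+1}(1/4)$ assembled into $\gamma_{2\ell,j}(4)$. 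Combining the two asymptotics, the prefactor $\zeta/(1+\zeta)$ absorbs the residual $\zeta^{-1/2}$ into $1/(\zeta^{-1/2}+\zeta^{1/2})$, so that multiplication by $\cos(2\pi mz)$ expands as $\sum_{k\geq 0}\beta_{m,2k}z^{2k}$; Taylor expansion of $e^{\pi iz^2/(2\tau)}$ and $q^{-1/4}$ in $\tau$ introduces the indices $r$ and $s$. The inner integrals $\int_0^{1/8}z^{2\ell+2k+2r}/\cosh(\pi iz/(2\tau))\,dz \sim -iE_{2\ell+2k+2r}\tau^{2\ell+2k+2r+1}$ are evaluated by Lemma \ref{integralas}(ii), producing the Euler numbers, and the elementary identity $\frac{(4\pi i)^{2\ell}(4\pi i)^jB_{2j+2\ell+1}(1/4)}{(2\ell)!\,j!\,(2j+2\ell+1)} = \frac{i^j}{2^{j+2\ell}}\gamma_{2\ell,j}(4)$ matches the coefficients of the claim. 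The tail $\int_{1/8}^{1/2}\mathcal{V}(\zeta;q)\cos(2\pi mz)\,dz$, where the poles of $G_{\nu,1}$ and $G_{\nu,2}$ at $z=1/2$ mutually cancel in $\mathcal{V}$, is absorbed into the error term following the template of \eqref{g1error} and \eqref{g2bound}.

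I expect the main obstacle to be the modular inversion of $\vartheta(z+\tau;2\tau)$: this theta function is evaluated at a non-real argument, and one must carefully track the phases through the inversion to the cusp $-1/(2\tau)\to i\infty$. This bookkeeping, together with the identity $\sqrt{-2i\tau}/\sqrt{-i\tau}=\sqrt{2}$, is precisely what produces both the dominant exponential $e^{\pi i/(8\tau)}$ and the $\sqrt{2}$ prefactor of the claim.
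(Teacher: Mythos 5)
Your proposal is correct and, in all essential respects, follows the paper's own proof: the same decomposition into $G_{\nu,1}+G_{\nu,2}$, the same splitting of the partial theta function by parity (your $F_{1,4}(z/2;\tau/8)-F_{3,4}(z/2;\tau/8)$ is literally the paper's $F_{\frac12,2}(z;\frac\tau2)-F_{\frac32,2}(z;\frac\tau2)$, with the correct convergence range $|z|<\tfrac18$), the cancellation of the $d$-independent Gaussian terms, the survival of only even $\ell$ via $B_n(1-x)=(-1)^nB_n(x)$, the evaluation of the resulting $\cosh$-integrals by Lemma \ref{integralas}(ii), and the coefficient identity recovering $i^j2^{-j-2\ell}\gamma_{2\ell,j}(4)$, which I checked against \eqref{gamma2lj}. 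The one step you organize differently is the transformation of the infinite product: the paper uses the elementary rewriting $\frac{(-q)_\infty}{(\zeta q;q^2)_\infty(\zeta^{-1}q;q^2)_\infty}=\frac{(\zeta q^2;q^2)_\infty(\zeta^{-1}q^2;q^2)_\infty(q^2;q^2)_\infty}{(\zeta q)_\infty(\zeta^{-1}q)_\infty(q)_\infty}$ and applies Lemma \ref{transfolemma} to theta quotients with the \emph{real} elliptic argument $z$ at nomes $q$ and $q^2$, whereas you package everything into $-iq^{-3/8}\zeta^{-1/2}\eta(2\tau)^2/(\eta(\tau)\,\vartheta(z+\tau;2\tau))$ (an identity I verified) and must invert a theta function at the shifted complex argument $z+\tau$. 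Both land on the same intermediate asymptotic \eqref{thetaap}; the paper's route avoids the phase bookkeeping at a non-real argument that you correctly identify as your main technical burden, so if you want the least painful path you may prefer their rewriting. One point in your favor: you explicitly note that the simple poles of $G_{\nu,1}$ and $G_{\nu,2}$ at $\zeta=-1$ cancel in $\mathcal{V}$ and treat the tail integral of the full generating function accordingly, which is a more careful handling of the endpoint $z=\tfrac12$ than the paper's stated bound $G_{\nu,2}\ll|\tau|^{-1/2}$ on all of $0<z<\tfrac12$.
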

Away from $q=1$, we have the following behavior.
\begin{lemma}\label{numqasym2}
For $y \le |x| \le \frac{1}{2}$, we have, for some $\varepsilon>0$,
\[
\mathcal{V}_{m} (q) \ll e^{\frac{\pi}{8y} - \varepsilon  }.
\]
\end{lemma}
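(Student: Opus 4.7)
My plan is to mimic the proof of Lemmas \ref{umqerror} and \ref{wmnasym2}. Using the symmetry $\nu(m,n) = \nu(-m,n)$ together with Cauchy's integral formula, I first write
\[
\mathcal{V}_m(q) = 2\int_0^{1/2}\mathcal{V}(\zeta;q)\cos(2\pi m z)\,dz,
\]
so it suffices to bound $\mathcal{V}(\zeta; q)$ uniformly in $z \in [0, \tfrac12]$ by $y^{-C} e^{c\pi/y}$ for some constants $C$ and $c < \tfrac18$; any such bound forces the claimed estimate $\mathcal{V}_m(q) \ll e^{\pi/(8y) - \varepsilon}$ once $y$ is sufficiently small.

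Decompose $\mathcal{V}(\zeta; q)$ according to the two summands displayed in \S 2.4. The two partial theta--type sums $\sum_{n\ge 0}(-1)^n \zeta^n q^{n(n+1)/2}$ and $\sum_{n\ge 0}(-1)^n \zeta^n q^{n^2+n}$ are uniformly of size $O(y^{-1/2})$ by a Gaussian-tail estimate as in \eqref{g2bound}, and the apparent $1/(1+\zeta)$ pole of each summand at $z = \tfrac12$ cancels between the two terms (since $\mathcal{V}(\zeta;q)$ is a priori a power series in $\zeta$). The only remaining source of exponential growth is the infinite product $(-q)_\infty / \bigl[(\zeta q;q^2)_\infty(\zeta^{-1}q;q^2)_\infty\bigr]$, which, by the Jacobi triple product together with $(-q)_\infty = (q^2;q^2)_\infty/(q)_\infty$, equals
\[
\frac{(q^2;q^2)_\infty^2}{(q)_\infty \sum_{n\in\Z}(-1)^n \zeta^n q^{n^2}}.
\]
I would then invoke a crank-style partial fraction identity for the reciprocal of this Jacobi theta, analogous to \eqref{crankpartial}, rewriting the quotient as a Lerch-type sum divided by a small power of $(q^2;q^2)_\infty$ (or of $(q)_\infty$). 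The Lerch sum is bounded by $O(y^{-C})$ as in \eqref{Lerchbound}, and the remaining eta-quotient factors are controlled by Lemma \ref{Pesti} together with its direct analogue for $(q^2;q^2)_\infty^{-1}$, whose growth is governed by the transformation of $\eta(2\tau)$ supplied by Lemma \ref{transfolemma}.

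The main obstacle I anticipate is the exponent bookkeeping. The slack between the target exponent $\pi/(8y) = 9\pi/(72y)$ and the single-factor bound $5\pi/(72y)$ afforded by Lemma \ref{Pesti} is only $\pi/(18y)$, so one must verify that the partial fraction expansion produces at most one factor of $(q)_\infty^{-1}$, with the remaining factors being $(q^2;q^2)_\infty^{-1}$'s, whose contribution at the dominant cusp is a priori milder. Once this accounting is settled, integration in $z$ yields the claimed bound and completes the proof.
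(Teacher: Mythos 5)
Your outline follows the same overall strategy as the paper: reduce to the infinite product, convert it via a crank partial-fraction identity into a Lerch-type sum (bounded polynomially as in \eqref{Lerchbound}) times an eta-quotient, and estimate that quotient on the minor arcs. Two points of divergence deserve comment. First, your detour through the Jacobi triple product is circular: rewriting the product as $(q^2;q^2)_\infty^2\big/\bigl((q)_\infty\sum_{n\in\Z}(-1)^n\zeta^nq^{n^2}\bigr)$ and then asking for a partial-fraction expansion of the reciprocal theta just returns you to where you started. The paper instead applies \eqref{crankpartial} directly (with $q\mapsto q^2$ and a shift of $\zeta$) to get the explicit identity
\[
\frac{(-q)_\infty}{(\zeta q;q^2)_\infty(\zeta^{-1}q;q^2)_\infty}=\frac{1}{(q)_\infty(\zeta q^2;q^2)_\infty}\sum_{n\in\Z}\frac{(-1)^nq^{n(n+1)}}{1-\zeta q^{2n+1}},
\]
which settles your bookkeeping question: exactly one factor $(q)_\infty^{-1}$ and one factor of type $(q^2;q^2)_\infty^{-1}$ occur. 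Second, your plan to finish by multiplying Lemma \ref{Pesti} with ``its direct analogue for $(q^2;q^2)_\infty^{-1}$'' needs a correction: that analogue is not obtained by substituting $\tau\mapsto2\tau$ into Lemma \ref{Pesti}, because for $\tfrac14<|x|\le\tfrac12$ the point $2\tau$ violates the hypothesis $y'\le|x'|\le\tfrac12$, and indeed at $x=\pm\tfrac12$ the variable $q^2$ sits at its dominant cusp, so $1/(q^2;q^2)_\infty$ attains its full size $e^{\pi/(24y)}=e^{3\pi/(72y)}$ on the minor arc. No minor-arc saving is available from that factor; the entire $\varepsilon$ must come from Lemma \ref{Pesti}, and the accounting closes only because $\tfrac{5\pi}{72y}+\tfrac{3\pi}{72y}=\tfrac{8\pi}{72y}<\tfrac{\pi}{8y}$. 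The paper sidesteps this case analysis by bounding $\log\lvert 1/((q)_\infty(q^2;q^2)_\infty)\rvert$ through the combined Lambert series, whose value at the dominant point is $\tfrac{\pi}{8y}+O(\log y)$, and extracting an order-$1/y$ saving from the $n=1$ term via $1-|q|=2\pi y+O(y^2)$ versus $|1-q|\ge2\sqrt2\pi y$. Your route is an acceptable, if cruder, alternative once the $(q^2;q^2)_\infty$ estimate is actually justified; your remark that the $1/(1+\zeta)$ poles cancel between the two summands is correct and is indeed needed to keep the non-product contributions polynomially bounded uniformly near $z=\tfrac12$.
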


 Lemmas \ref{numqasym1} and \ref{numqasym2} enable us to apply Theorem \ref{genasym} to obtain the following asymptotic formula for $\nu(m,n)$.

\begin{theorem}
For $m \in \N_{0}$ and $N \in \N$, we have 
\begin{align*}
\nu (m,n) &= 2^{\frac32} \sum_{\substack{k,r,\ell,j,s \geq 0\\ j+r+s+2k+2\ell+1 \le N}  } \beta_{m, 2k} \frac{  \pi^{r+s+1} }{ 2^{ r+s+j {+ 2\ell} } r! s!} \gamma_{2\ell,j}(4) (-1)^{j+r+k + \ell +1}  E_{2k+2r+2\ell} Y_{j+r+s+2k+2\ell+2} (n) \\
&\qquad \qquad\qquad\qquad\qquad\qquad
+ O  \( n^{-\frac{N+2}{2}} e^{\pi \sqrt{n}} \),
\end{align*}
where $Y_k (n) := (4\sqrt{n})^{-k} I_{-k} ( \pi \sqrt{n} )$.
\end{theorem}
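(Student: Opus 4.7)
The proof proceeds in direct analogy with the proofs of Theorems \ref{umnasymthm}, \ref{wmasymthm}, and \ref{vmnasymthm}: the two lemmas just stated verify precisely the two hypotheses \eqref{Fqesti} and \eqref{Fqminor} needed to apply Theorem \ref{genasym}, and the asymptotic formula follows by reading off the coefficients.

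The plan is first to observe that Lemma \ref{numqasym1} puts $\mathcal{V}_m(q)$ into the shape \eqref{Fqesti} with $L=8$ (the relevant pole at $q=1$ is $e^{\frac{\pi i}{8\tau}}$ rather than $e^{\frac{\pi i}{6\tau}}$), while Lemma \ref{numqasym2} gives exactly the bound \eqref{Fqminor} with $L=8$. One then identifies the coefficient $A(M)$ of $\tau^M$ in Lemma \ref{numqasym1} as
\[
A(M):=\sqrt{2}\sum_{\substack{k,r,\ell,j,s\geq 0 \\ j+r+s+2k+2\ell+1=M}} (-1)^s \beta_{m,2k}\frac{(\pi i)^{r+s}}{2^{r+s+j+2\ell}\, r!\,s!}\, i^{j+1}\gamma_{2\ell,j}(4)\, E_{2k+2r+2\ell},
\]
so that Theorem \ref{genasym} immediately yields
\[
\nu(m,n) = -2\pi i\sum_{M=1}^{N} A(M)\left(\frac{i}{4\sqrt{n}}\right)^{M+1} I_{-M-1}\!\left(\pi\sqrt{n}\right) + O\!\left(n^{-\frac{N+2}{2}} e^{\pi\sqrt{n}}\right),
\]
using that with $L=8$ one has $2\pi\sqrt{2n/L}=\pi\sqrt{n}$ and $\sqrt{2Ln}=4\sqrt{n}$.

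Next I would substitute the explicit formula for $A(M)$ and reindex by $M=j+r+s+2k+2\ell+1$. Writing $\left(\tfrac{i}{4\sqrt{n}}\right)^{M+1} I_{-M-1}(\pi\sqrt{n}) = i^{M+1}\, Y_{M+1}(n)$ with $Y_k(n):=(4\sqrt{n})^{-k}I_{-k}(\pi\sqrt{n})$, and multiplying through by the factor $-2\pi i$, the accumulated powers of $i$ from the $-2\pi i$ outside, the $i^{j+1}$ inside $A(M)$, the $(\pi i)^{r+s}$, and the $i^{M+1}=i^{j+r+s+2k+2\ell+2}$ collapse after elementary bookkeeping to $(-1)^{j+r+k+\ell+1}$, while the absolute values combine with $\sqrt{2}$ to produce the advertised prefactor $2^{3/2}\pi^{r+s+1}/(2^{r+s+j+2\ell}\, r!\,s!)$. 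This yields precisely the expression in the theorem.

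Since the structural argument is identical to the three previous cases and the combinatorial factors are forced by Lemma \ref{numqasym1}, the only thing that requires care is the sign and power-of-two bookkeeping in collapsing the various $i$'s and $2$'s — not a real obstacle, just a routine verification analogous to (indeed somewhat more delicate than) the $u$-case treatment in the proof of Theorem \ref{umnasymthm}. The real work is all done by Lemmas \ref{numqasym1} and \ref{numqasym2}, whose proofs are the content of the next section.
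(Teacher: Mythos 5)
Your proposal is correct and follows exactly the paper's (essentially unwritten) argument: the paper likewise just invokes Theorem \ref{genasym} with $L=8$ on the expansion of Lemma \ref{numqasym1} and the minor-arc bound of Lemma \ref{numqasym2}, and your sign and power-of-two bookkeeping (the accumulated $i$'s collapsing to $(-1)^{j+r+k+\ell+1}$ and the prefactor $2^{3/2}\pi^{r+s+1}/(2^{r+s+j+2\ell}r!s!)$) checks out.
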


By expanding first three non-zero terms, we find the following asymptotic main terms.

\begin{corollary}
We have
\begin{align*}
{\nu(m,n) =\frac{\pi}{2\sqrt{2}} Y_2 (n) + \frac{5\pi^2}{8\sqrt{2}} Y_3(n) +
 \frac{\pi^3 (77 - 64 m^2 ) }{64 \sqrt{2}} Y_4 (n) + O \( n^{-\frac 52} e^{\pi \sqrt{n} } \).}
\end{align*}
\end{corollary}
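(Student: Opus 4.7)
The plan is to apply the preceding theorem with $N=3$, whose error term $O(n^{-5/2}e^{\pi\sqrt{n}})$ is exactly the one claimed, and to enumerate the finitely many five-tuples $(k,r,\ell,j,s)\in\N_0^5$ satisfying the resulting constraint $j+r+s+2k+2\ell\le 2$. These organize by the total weight $p:=j+r+s+2k+2\ell$, since each tuple with weight $p$ contributes to $Y_{p+2}(n)$: exactly one tuple at $p=0$ (the zero tuple, contributing to $Y_2(n)$), three tuples at $p=1$ (coming from $r=1$, $s=1$, or $j=1$ with the remaining entries vanishing, contributing to $Y_3(n)$), and eight tuples at $p=2$ (contributing to $Y_4(n)$).

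Next I would record the few elementary inputs. From $B_{1}(x)=x-\tfrac{1}{2}$ and $B_{3}(x)=x^{3}-\tfrac{3}{2}x^{2}+\tfrac{1}{2}x$ we have $B_{1}(1/4)=-1/4$ and $B_{3}(1/4)=3/64$, so that definition \eqref{gamma2lj} immediately gives
\[
\gamma_{0,0}(4)=-\tfrac{1}{4},\qquad \gamma_{0,1}(4)=\tfrac{\pi}{8},\qquad \gamma_{2,0}(4)=-\tfrac{\pi^{2}}{2};
\]
the only Euler numbers that enter are $E_{0}=1$ and $E_{2}=-1$. Finally, the Taylor expansion
\[
\frac{\cos(2\pi m z)}{2\cos(\pi z)}=\tfrac{1}{2}+\tfrac{\pi^{2}(1-4m^{2})}{4}\,z^{2}+O(z^{4})
\]
supplies $\beta_{m,0}=1/2$ and $\beta_{m,2}=\pi^{2}(1-4m^{2})/4$, the only two $\beta$-coefficients relevant in the allowed range.

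Substituting these values into the summand of the theorem and adding the contributions within each $p$-group collapses the sum into the three terms displayed. The $m$-dependence in the $Y_{4}(n)$ coefficient enters entirely through the unique tuple $(k,r,\ell,j,s)=(1,0,0,0,0)$, whose factor $\beta_{m,2}$ is the only one that depends on $m$; all other tuples use $\beta_{m,0}=1/2$ and contribute $m$-independently. The only delicate point is careful bookkeeping of the signs $(-1)^{j+r+k+\ell+1}$, the powers of two in $2^{r+s+j+2\ell}$, and the overall prefactor $2^{3/2}$; there is no new analytic content and the argument is a direct finite calculation.
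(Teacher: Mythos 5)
Your overall strategy is exactly the paper's: the corollary follows by applying the preceding theorem with $N=3$ and summing the finitely many tuples with $j+r+s+2k+2\ell\le 2$, and your tuple counts ($1$, $3$, $8$), your values of $\beta_{m,0}$ and $\beta_{m,2}$, and the three $\gamma$-values you list are all correct. However, your inventory of ``elementary inputs'' is incomplete in two places, and both omissions hit the coefficient of $Y_4(n)$. First, the tuple $(k,r,\ell,j,s)=(0,2,0,0,0)$ carries the factor $E_{2k+2r+2\ell}=E_4=5$, so it is not true that only $E_0$ and $E_2$ enter; this tuple contributes $\frac{5\pi^3}{64}$ (before the prefactor $2^{3/2}$), and the factor $5$ from $E_4$ is essential to the constant $77$. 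Second, the tuple $(0,0,0,2,0)$ requires $\gamma_{0,2}(4)$, which is absent from your list; from $B_5\left(\frac14\right)=-\frac{25}{1024}$ one gets $\gamma_{0,2}(4)=-\frac{5\pi^2}{32}$, and the corresponding contribution $\frac{5\pi^3}{256}$ is likewise needed. With these two inputs restored, the eight $p=2$ terms sum to $\frac{\pi^3\left(77-64m^2\right)}{256}$, and multiplying by $2^{3/2}$ gives the stated $\frac{\pi^3\left(77-64m^2\right)}{64\sqrt{2}}$; carrying out the computation with only the inputs you list would instead produce $52-64m^2$ in place of $77-64m^2$. The $p=0$ and $p=1$ groups are unaffected by these omissions, and your description of them does yield the correct coefficients $\frac{\pi}{2\sqrt{2}}$ and $\frac{5\pi^2}{8\sqrt{2}}$. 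Your identification of the sole source of $m$-dependence (the tuple $(1,0,0,0,0)$ via $\beta_{m,2}$) is also correct.
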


Now we prove Lemma \ref{numqasym1}.

\begin{proof}[Proof of Lemma \ref{numqasym1}]
As before, we write
\[
\mathcal{V} \left( \zeta; q \right) = G_{\nu,1} (\zeta; q) + G_{\nu,2} (\zeta; q), 
\]
where 
\begin{align*}
G_{\nu, 1}(\zeta; q)&:=\frac{\zeta(-q)_\infty}{(1+\zeta)\left(\zeta q ;q^2\)_{\infty} \( \zeta^{-1} q; q^2\right)_\infty}\sum_{n\geq 0} (-1)^n\zeta^n q^{\frac{n(n+1)}{2}},\\
G_{\nu, 2}(\zeta; q)&:=-\frac{\zeta}{1+\zeta}\sum_{n\geq 0} (-1)^n \zeta^n q^{n^2+n}.
\end{align*}
By splitting the partial theta function into even and odd terms, we find that 
\[
G_{\nu,1} (\zeta; q) = \frac{\zeta^{\frac12} q^{-\frac{1}{8}} \( \zeta q^2 ;q^2 \)_{\infty} \( \zeta^{-1} q^2 ; q^2\)_{\infty}\( q^2 ; q^2 \)_{\infty}}{(1+\zeta) \left( \zeta q \)_{\infty} \(\zeta^{-1} q \)_{\infty} \( q \right)_\infty} \(F_{\frac12,2}\left(z; \frac{\tau}{2} \right)-F_{\frac32,2}\left(z; \frac{\tau}{2}  \right) \),
\]
where we used that
\begin{equation*}
\frac{(-q)_\infty}{\left(\zeta q ;q^2\)_{\infty} \( \zeta^{-1} q ;q^2\)_{\infty} }  = \frac{ (\zeta q^2 ;q^2)_{\infty} ( \zeta^{-1} q^2; q^2)_{\infty} ( q^2 ; q^2)_{\infty}}{\left(\zeta q \)_{\infty}  (\zeta^{-1} q)_{\infty} \( q\right)_\infty   }.
\end{equation*}

By employing Lemma \ref{transfolemma}, we first approximate the contribution coming from the infinite product, namely, we have, 
\begin{equation}\label{thetaap}
\frac{\zeta^{\frac12} q^{-\frac{1}{8}} \( \zeta q^2 ;q^2 \)_{\infty} \( \zeta^{-1} q^2 ; q^2\)_{\infty}\( q^2 ; q^2 \)_{\infty}}{(1+\zeta) \left( \zeta q \)_{\infty} \(\zeta^{-1} q \)_{\infty} \( q \right)_\infty} 
= \frac{\zeta^{\frac12} q^{-\frac{1}{4}} e^{\frac{\pi i}{ 8 \tau}} e^{\frac{\pi i z^2}{2 \tau}} }{\sqrt{2}(1+\zeta) e^{-\frac{\pi i z}{2\tau}}\left(1+e^{\frac{\pi i z}{\tau}}\right)} \( 1+ O\left(e^{-\pi(1-z)\rm{Im}\(-\frac1{\tau}\)}\right)\).
\end{equation}

Combining this with Theorem \ref{FasymThm}, we obtain that for $0 \le z \le \frac{1}{4}$
\begin{align*}
G_{\nu,1} \left( \zeta; q \right) &= -\frac{ { \sqrt{2} }\zeta^{\frac12} q^{-\frac{1}{4}} e^{\frac{\pi i}{ 8 \tau}} e^{\frac{\pi i z^2}{2 \tau}} }{(1+\zeta) e^{-\frac{\pi i z}{2\tau}}\left(1+e^{\frac{\pi i z}{\tau}}\right)} 
\sum_{\ell\geq 0} \frac{(4\pi i z)^{2\ell}}{(2\ell)!} \sum_{j=0}^N \frac{(4\pi i)^{j}}{j!} \frac{ B_{2j+2\ell+1}\left(\frac14\right)}{2j+2\ell+1} \tau^{j} \\
&\qquad\qquad\qquad\qquad\qquad\qquad\qquad\qquad\qquad\qquad\qquad\qquad\quad+O \left( |\tau|^{N+1} e^{ { \frac{\pi }{32}} {\rm Im} \(-\frac{1}{\tau}\) }\). 
\end{align*}
In the remaining range $\frac{1}{4} \le z \le \frac12$, we bound
\begin{equation*}
G_{\nu,1} (\zeta ;q) \ll |\tau |^{-\frac{1}{2}} e^{ { \frac{\pi }{32}} {\rm Im} \(-\frac{1}{\tau} \)}.
\end{equation*}
Finally, for $0 < z< \frac12$, we have
\begin{equation*}
G_{\nu,2} (\zeta; q) \ll |\tau |^{-\frac{1}{2}}.
\end{equation*}
%
Proceeding as before, but using the second formula in Lemma \ref{integralas} finishes the proof.
\end{proof}

We next bound the generating function away from the dominant pole.

\begin{proof}[Proof of Lemma \ref{numqasym2}]
Exactly as before, one can show that all contributions other then those from the infinite product have at most polynomial growth in $1/y$, and thus it suffices to show that for $y \le |x| \le \frac{1}{2}$
\[
\frac{(-q)_{\infty} }{\left(\zeta q ;q^2\)_{\infty} \( \zeta^{-1}q ; q^2\right)_{\infty}}  \ll e^{\frac{\pi}{8y} - \varepsilon  },
\]
for some $\varepsilon>0$.
From (\ref{crankpartial}) we obtain that
$$
\frac{(-q)_\infty}{\left( \zeta q; q^2 \right)_\infty
\left( \zeta^{-1} q; q^2 \right)_\infty} = 
\frac{1}{(q)_\infty 
\left( \zeta q^2; q^2 \right)_\infty} 
\sum_{n\in\Z}
\frac{(-1)^n q^{n(n+1)}}
{1-\zeta q^{2n+1}}.
$$
As in (\ref{Lerchbound}), we obtain
$$
\left| \sum_{n\in\Z}
\frac{(-1)^n q^{n(n+1)}}{1-\zeta q^{2n+1}} \right|
\ll y^{-\frac32}.
$$
To bound the remaining product, we write
$$
\log \left( \frac{1}{ (q)_\infty \left(  q^2; q^2 \right)_\infty}\right)=
\sum_{n\geq 1} \frac{q^n}{n}
\left( \frac{1}{1-q^n} + 
\frac{1}{1-q^{2n}} \right),
$$
which implies that
$$
\log \left( \left|
\frac{1}{(q)_\infty \left(  q^2; q^2 \right)_\infty}\right|\right)\leq
\sum_{n\geq 1} \frac{|q|^n}{n}
\left( \frac{1}{1-|q|^n} + 
\frac{1}{1-|q|^{2n}} \right) - |q| \left( \frac{1}{1-|q|} - 
\frac{1}{|1-q|} \right).
$$
The sum on $n$ now equals
$$
\log \left( \frac{1}
{\left(  |q|; |q| \right)_\infty \left(  |q|^2; |q|^2 \right)_\infty } \right)= 
\frac{\pi}{8y} + O \left( \log(y) \right).
$$
Moreover, as in the proof of Lemma 3.5 in \cite{BD}, we may bound
\begin{align*}
1-|q| &= 2\pi y + O\left( y^2 \right),\\
|1-q| &\geq 2 \sqrt{2} \pi y.
\end{align*}
This easily yields the claim.
\end{proof}

\end{document}